\newcommand{\comment}[1]{}
\newcommand{\bR}{{\mathbb R}}
\newcounter{rek}
\title[the product space $H^1_L\times BMO_L$]{Bilinear decompositions for the product space $H^1_L\times BMO_L$}         
\author{Luong Dang Ky}
\keywords{Schr\"odinger operator,  Hardy-Orlicz space, bilinear operator, BMO}
\subjclass[2010]{35J10, 42B35}
\date{}
\begin{document}
 \maketitle

\begin{abstract}
In this paper, we improve a recent result by Li and Peng on products  of functions in $H_L^1(\bR^d)$ and $BMO_L(\bR^d)$, where $L=-\Delta+V$ is a Schr\"odinger operator with $V$ satisfying an appropriate reverse H\"older inequality. More precisely, we prove that such products may be written as the sum of two continuous bilinear operators, one
from $H_L^1(\bR^d)\times BMO_L(\bR^d) $ into $L^1(\bR^d)$, the other one from $H^1_L(\bR^d)\times BMO_L(\bR^d) $ into  $H^{\log}(\bR^d)$, where the space $H^{\log}(\bR^d)$ is the set of distributions $f$ whose grand maximal function $\mathfrak Mf$ satisfies
$$\int_{\mathbb R^d} \frac {|\mathfrak M f(x)|}{\log (e+ |\mathfrak Mf(x)|)+  \log(e+|x|)}dx <\infty.$$

\end{abstract}

\newtheorem{theorem}{Theorem}
\newtheorem{lemma}{Lemma}[section]
\newtheorem{proposition}{Proposition}[section]
\newtheorem{remark}{Remark}[section]
\newtheorem{corollary}{Corollary}[section]
\newtheorem{definition}{Definition}[section]
\newtheorem{example}{Example}[section]
\numberwithin{equation}{section}
\newtheorem{Theorem}{Theorem}
\newtheorem{Lemma}{Lemma}[section]
\newtheorem{Proposition}{Proposition}[section]
\newtheorem{Remark}{Remark}[section]
\newtheorem{Corollary}{Corollary}[section]
\newtheorem{Definition}{Definition}[section]
\newtheorem{Example}{Example}[section]
\newtheorem{Question}{Question}
\newtheorem*{theorema}{Theorem A}
\newtheorem*{theoremb}{Theorem B}
\newtheorem*{theoremc}{Theorem C}

\section{Introduction}

Products of functions in $H^1$ and $BMO$ have been firstly considered  by Bonami, Iwaniec, Jones and Zinsmeister in \cite{BIJZ}. Such products make sense as distributions, and can be written as the sum of an integrable function and a function in a weighted Hardy-Orlicz space. To be more precise, for $f\in H^1(\bR^d)$ and $g\in BMO(\bR^d)$, we define the product (in the distribution sense) $f \times g$ as the distribution whose action on the Schwartz function $\varphi\in\mathcal S(\bR^{d})$ is given by
\begin{equation}
\left\langle f \times g,\varphi\right\rangle:=\left\langle \varphi
g,f\right\rangle,
\end{equation}
where the second bracket stands for the duality bracket between $H^1(\bR^{d})$ and its dual $BMO(\bR^{d})$. It is then proven in \cite{BIJZ} that
\begin{equation}\label{nocancel}
f \times g\in L^{1}(\bR^{d})+ H^{\Xi}_\sigma(\bR^{d}).
\end{equation}
Here $ H^\Xi_\sigma(\bR^d)$ is the weighted Hardy-Orlicz space related to the Orlicz function
\begin{equation}\label{orl-log}
\Xi(t):=\frac t{\log (e+t)}
\end{equation}
and with weight $\sigma(x):=\frac{1}{\log (e+|x|)}$.

Let $L= -\Delta+ V$ be a Schr\"odinger operator on $\mathbb R^d$, $d\geq 3$, where $V$ is a nonnegative potential, $V\ne 0$, and belongs to the reverse H\"older class $RH_{d/2}$. In \cite{DGMTZ} and \cite{DZ}, Dziuba\'nski et al. introduced two kinds of function spaces associated with $L$. One is the Hardy space $H^1_L(\mathbb R^d)$, the other is the space $BMO_L(\mathbb R^d)$. They established in \cite{DGMTZ} that  the dual space of $H^1_L(\mathbb R^d)$ is just $BMO_L(\mathbb R^d)$. Unfortunately, as for the classical spaces $H^1(\mathbb R^d)$ and $BMO(\mathbb R^d)$, the pointwise products $fg$ of  functions $f\in H^1_L(\mathbb R^d)$  and functions $g\in BMO_L(\mathbb R^d)$ maybe not integrable. However, similarly to the classical setting, Li and Peng  showed in \cite{LP1} that  such products can be defined in the sense of distributions which action on the Schwartz function $\varphi\in\mathcal S(\bR^{d})$ is
\begin{equation}
\langle f\times g,\varphi\rangle:= \langle \varphi g, f\rangle,
\end{equation}
where the second bracket stands for the duality bracket between $H^1_L(\mathbb R^{d})$ and its dual $BMO_L(\mathbb R^{d})$. Moreover, they proved that $f\times g$ can be written as the sum of two distributions, one in $L^1(\mathbb R^d)$, the other in $H^\Xi_{L,\sigma}(\mathbb R^d)$ the weighted Hardy-Orlicz space associated with $L$ related to the Orlicz function $\Xi(t)\equiv  \frac{t}{\log(e+t)}$ and the weight $\sigma(x)\equiv \frac{1}{\log(e+|x|)}$, see Definition \ref{the definition of weighted Hardy-Orlicz spaces}. 

More precisely, in \cite{LP1}, the authors proved the following.

\begin{Theorem}\label{the main theorem of Li and Peng}
For each $f\in H^1_L(\mathbb R^d)$, there are two bounded linear operators $L_f: BMO_L(\mathbb R^d)\to L^1(\mathbb R^d)$ and $H_f: BMO_L(\mathbb R^d)\to H^{\Xi}_{L,\sigma}(\mathbb R^{d})$ such that for every $g\in BMO_L(\mathbb R^d)$, we have
\begin{equation}\label{linear decomposition for the product space}
f \times g = L_f(g)+ H_f(g)
\end{equation}
and the uniform bound
\begin{equation}\label{the uniform bound for linear decomposition}
\|L_f(g)\|_{L^1}+ \|H_f(g)\|_{H^{\Xi}_{L,\sigma}}\leq C \|f\|_{H^1_L}\|g\|_{BMO_L^+},
\end{equation}
where $\|g\|_{BMO_L^+}= \|g\|_{BMO_L}+ |g_{\mathbb B}|$, $g_{\mathbb B}$ denotes the mean value of $g$ over the unit ball $\mathbb B$.
\end{Theorem}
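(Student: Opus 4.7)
The plan is to exploit an atomic decomposition of $f$ and, atom by atom, to split the product $a_j g$ into an $L^1$ piece (obtained by subtracting an average of $g$) and a ``singular'' piece, on which the Hardy--Orlicz norm can absorb a logarithmic blow-up. Fix, once and for all, for each $f\in H^1_L(\bR^d)$ an atomic representation $f=\sum_j\lambda_j a_j$ with $\sum_j|\lambda_j|\lesssim\|f\|_{H^1_L}$, where each $a_j$ is an $H^1_L$-atom supported in a ball $B_j=B(x_j,r_j)$ with $\|a_j\|_\infty\le|B_j|^{-1}$ and $\int a_j=0$ whenever $r_j<\rho(x_j)$; here $\rho$ is Shen's critical radius function determined by $V\in RH_{d/2}$. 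Set $m_j(g):=g_{B_j}$ if $r_j<\rho(x_j)$ and $m_j(g):=0$ otherwise, and define
$$H_f(g):=\sum_j\lambda_j\, m_j(g)\, a_j,\qquad L_f(g):=f\times g-H_f(g).$$
Linearity of $H_f$ and $L_f$ in $g$ is immediate because the decomposition of $f$ has been frozen; what remains is to prove the two continuity bounds and to verify the algebraic manipulation as a genuine identity in $\mathcal S'(\bR^d)$.

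\emph{Step 1: $L^1$ bound for $L_f(g)$.} Formally $L_f(g)=\sum_j\lambda_j a_j\bigl(g-m_j(g)\bigr)$. For first-kind atoms (small balls) the definition of BMO directly yields
$$\int_{B_j}\bigl|a_j\,(g-g_{B_j})\bigr|\,dx\le\|a_j\|_\infty\int_{B_j}|g-g_{B_j}|\,dx\lesssim\|g\|_{BMO_L}.$$
For second-kind atoms ($r_j\ge\rho(x_j)$), the defining property of $BMO_L$ (mean values over balls of radius comparable to or larger than $\rho$ require no subtraction) gives $\tfrac{1}{|B_j|}\int_{B_j}|g|\,dx\lesssim\|g\|_{BMO_L^+}$, so $\|\lambda_j a_j g\|_{L^1}\lesssim|\lambda_j|\|g\|_{BMO_L^+}$. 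Summing in $j$ yields $\|L_f(g)\|_{L^1}\lesssim\|f\|_{H^1_L}\|g\|_{BMO_L^+}$.

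\emph{Step 2: Hardy--Orlicz bound for $H_f(g)$.} The scalar $|g_{B_j}|$ grows only logarithmically in $1/r_j$ and in $|x_j|$, and the Orlicz function $\Xi(t)=t/\log(e+t)$ together with the weight $\sigma(x)=1/\log(e+|x|)$ are designed to absorb precisely this growth. Using the atomic characterisation of $H^{\Xi}_{L,\sigma}(\bR^d)$, $m_j(g)a_j$ is $m_j(g)$ times an $H^1_L$-atom; the estimate
$$|g_{B_j}|\lesssim\|g\|_{BMO_L^+}\Bigl(1+\log(e+|x_j|)+\log\tfrac{\rho(x_j)}{r_j}\Bigr),$$
obtained by telescoping through a chain of doubled balls from $B_j$ to the unit ball and invoking the stopping property of $BMO_L$ at scale $\rho(x_j)$, combined with the elementary inequality $\Xi(At)/\Xi(t)\lesssim\log(e+A)$, yields a uniform Orlicz-atom estimate for each summand. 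Summing over $j$ with $\sum_j|\lambda_j|\lesssim\|f\|_{H^1_L}$ closes the bound.

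\emph{Step 3: distributional identity and main obstacle.} To justify that $L_f(g)+H_f(g)=f\times g$ in $\mathcal S'(\bR^d)$, I would test against $\varphi\in\mathcal S(\bR^d)$, use $\sum_j\lambda_j a_j\to f$ in $H^1_L$, and invoke the pointwise-multiplier fact $\|\varphi g\|_{BMO_L}\lesssim\|\varphi\|_{\mathcal S}\|g\|_{BMO_L^+}$ to exchange the duality pairing with the sum. The principal obstacle is Step 2: producing the correct logarithmic bound on $|g_{B_j}|$ in terms of $(|x_j|,r_j,\rho(x_j))$ and then cashing it in against the pair $(\Xi,\sigma)$. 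This is precisely where the reverse H\"older hypothesis $V\in RH_{d/2}$ enters, via Shen's slow-variation property of $\rho$, so that the chaining argument produces exactly the logarithm appearing in $\Xi$ and $\sigma$ and matches the scale of the target Hardy--Orlicz space.
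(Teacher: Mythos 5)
First, a point of comparison: the paper you were given does not prove this theorem at all --- it is quoted from Li and Peng \cite{LP1} as background, so there is no internal proof to measure your attempt against. That said, your strategy (freeze an atomic decomposition $f=\sum_j\lambda_j a_j$, set $H_f(g)=\sum_j\lambda_j g_{B_j}a_j$ over the cancellative atoms, and put everything else into $L_f(g)$) is exactly the $BIJZ$/Li--Peng route, and Steps 1 and 3 are essentially sound. In Step 2 you do not even need to chain all the way to the unit ball: the $BMO_L$ condition already controls $\frac{1}{|B(x_j,\rho(x_j))|}\int_{B(x_j,\rho(x_j))}|g|$, so only the factor $\log(\rho(x_j)/r_j)$ survives and no $\log(e+|x_j|)$ term is needed --- this is precisely why the present paper can later discard the $|g_{\mathbb B}|$ term.

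The genuine gap is in Step 2. The inequality you propose to ``cash in'' the logarithm, $\Xi(At)/\Xi(t)\lesssim\log(e+A)$, is false: $\Xi(At)/\Xi(t)=A\,\log(e+t)/\log(e+At)\to A$ as $t\to\infty$, which exceeds $\log(e+A)$ for large $A$. The mechanism that actually absorbs a coefficient $\Lambda_j\sim\|g\|_{BMO_L^+}\bigl(1+\log(e+|x_j|)+\log(\rho(x_j)/r_j)\bigr)$ is different and uses the height of the atom: on $B_j$ one has $\mathcal M_L(a_j)\lesssim|B_j|^{-1}=c\,r_j^{-d}$, so $\Xi(\Lambda_j|B_j|^{-1})\lesssim |B_j|^{-1}\Lambda_j/\log(e+r_j^{-d})$, and it is the $\log(e+r_j^{-d})$ coming from the size of the atom that cancels $\log(\rho(x_j)/r_j)$, while the weight $\sigma$ handles $\log(e+|x_j|)$ and the tail $|x-x_j|>2r_j$ requires the decay of $\mathcal M_L a_j$ together with the lower type $p<1$ of $\Xi$ (compare Lemma 3.1 of the paper, which performs exactly this kind of local/global splitting). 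Moreover, ``summing over $j$ using $\sum_j|\lambda_j|\lesssim\|f\|_{H^1_L}$'' is not legitimate in an Orlicz space whose Luxemburg gauge is not homogeneous: you must verify a condition of the form $\sum_j\sigma(B_j)\,\Xi\bigl(\cdots/\lambda\bigr)\le 1$ for $\lambda\sim\|f\|_{H^1_L}\|g\|_{BMO_L^+}$, or invoke the subadditivity $\Xi(\sum_j t_j)\le C\sum_j\Xi(t_j)$ of Remark 2.2, neither of which your outline carries out. Until that computation is done --- and it is the heart of the theorem, as you yourself identify --- the proof is incomplete.
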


Our main theorem is as follows.

\begin{Theorem}\label{the main theorem}
There are two bounded bilinear operators $S_L: H^1_L(\mathbb R^d)\times BMO_L(\mathbb R^d)\to L^1(\mathbb R^d)$ and $T_L:  H^1_L(\mathbb R^d)\times BMO_L(\mathbb R^d)\to H^{\log}(\mathbb R^d)$ such that for every $(f,g)\in H^1_L(\mathbb R^d)\times BMO_L(\mathbb R^d)$, we have
\begin{equation}\label{bilinear decomposition for the product space}
f\times g = S_L(f,g) + T_L(f,g)
\end{equation}
and the uniform bound
\begin{equation}\label{the uniform bound for bilinear decomposition}
\|S_L(f,g)\|_{L^1}+ \|T_L(f,g)\|_{H^{\log}}\leq C \|f\|_{H^1_L}\|g\|_{BMO_L}.
\end{equation}
\end{Theorem}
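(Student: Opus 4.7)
The plan is to adapt the bilinear decomposition technique of Bonami-Grellier-Ky for the classical $H^1 \times BMO$ case to the Schr\"odinger setting, using the Dziuba\'nski-Zienkiewicz atomic characterization of $H^1_L$. Specifically, every $f \in H^1_L$ admits an atomic decomposition $f = \sum_j \lambda_j a_j$ with $\sum_j |\lambda_j| \lesssim \|f\|_{H^1_L}$, each $a_j$ supported in a ball $B_j = B(x_j, r_j)$ having vanishing moment $\int a_j = 0$ if $r_j < \rho(x_j)$ (where $\rho$ is the critical radius of $V$), and no cancellation otherwise. One must choose this decomposition to depend linearly on $f$---for instance via a Calder\'on reproducing formula or a wavelet/molecular construction---so that the resulting operators are genuinely bilinear, rather than merely linear for each fixed $f$ as in Theorem A.

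The decomposition is the natural one: for each cancellation atom, write $a_j g = a_j (g - g_{B_j}) + g_{B_j} a_j$, and set
$$
S_L(f,g) := \sum_j \lambda_j a_j (g - g_{B_j}) + \sum_{r_j \geq \rho(x_j)} \lambda_j a_j g,
\qquad
T_L(f,g) := \sum_{r_j < \rho(x_j)} \lambda_j g_{B_j} a_j.
$$
That $f \times g = S_L(f,g) + T_L(f,g)$ is a formal rearrangement whose convergence must be checked in the sense of distributions; this is routine given the uniform norm bounds proved below.

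The $L^1$ bound on $S_L$ follows term by term: H\"older's inequality and John-Nirenberg for $BMO_L$ give $\|a_j (g - g_{B_j})\|_{L^1} \le \|a_j\|_{L^2} \|g - g_{B_j}\|_{L^2(B_j)} \lesssim \|g\|_{BMO_L}$. For the non-cancellation atoms one uses the key $BMO_L$ feature that $|g_B| \lesssim \|g\|_{BMO_L}$ as soon as $B$ has radius at least $\rho$ at its center (no logarithmic loss), so $\|a_j g\|_{L^1} \lesssim \|g\|_{BMO_L}$ as well. Summing over $j$ yields $\|S_L(f,g)\|_{L^1} \lesssim \|f\|_{H^1_L} \|g\|_{BMO_L}$.

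The main obstacle is the $H^{\log}$ estimate on $T_L$. The starting point is the logarithmic growth
$$|g_{B_j}| \lesssim \|g\|_{BMO_L}\bigl( \log(e + \rho(x_j)/r_j) + \log(e + |x_j|) \bigr),$$
proved by iterating the $BMO_L$ doubling axiom to connect $B_j$ first to a ball of radius $\rho(x_j)$ and then to the unit ball. Each $\lambda_j g_{B_j} a_j$ is thus a multiple of an $H^1_L$-atom whose coefficient is summable up to this logarithmic factor. To bound $\mathfrak{M}(T_L(f,g))$ in the Musielak-Orlicz integral defining $H^{\log}$, one estimates the grand maximal function pointwise---standard atom calculations yield $\mathfrak{M}(a_j)(x) \lesssim r_j^d (r_j + |x - x_j|)^{-d-1}$---then exploits the concavity of $\Phi(t) = t/\log(e+t)$ to exchange sum and integral. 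The delicate step is verifying that the spatial $\log(e + |x_j|)$ coming from $|g_{B_j}|$ is precisely absorbed by the $\log(e + |x|)$ in the denominator of the $H^{\log}$ quasi-norm; this absorption is exactly what permits the right-hand side $\|g\|_{BMO_L}$ here, in place of the larger $\|g\|_{BMO_L^+}$ appearing in Theorem A.
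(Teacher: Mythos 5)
The central gap is bilinearity, which is precisely what distinguishes Theorem \ref{the main theorem} from the Li--Peng result (Theorem \ref{the main theorem of Li and Peng}). Your operators $S_L$ and $T_L$ are built from an atomic decomposition $f=\sum_j\lambda_ja_j$: the selection of which terms go where (the dichotomy $r_j<\rho(x_j)$ versus $r_j\geq\rho(x_j)$) and the averages $g_{B_j}$ are all tied to that particular decomposition, which is highly non-unique and not a linear function of $f$. Your parenthetical remark that one ``must choose this decomposition to depend linearly on $f$ --- for instance via a Calder\'on reproducing formula or a wavelet/molecular construction'' is exactly the crux of the theorem, and it is not routine: in the classical case this is the entire content of the wavelet paraproduct machinery of Bonami--Grellier--Ky \cite{BGK}, and you provide no analogous construction producing, linearly in $f$, pieces that are $(H^1_L,q)$-atoms with the $\rho$-dependent cancellation dichotomy. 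As written, your argument yields a decomposition that is linear in $g$ for each fixed $f$ (i.e.\ a variant of Theorem \ref{the main theorem of Li and Peng}, with the improved target $H^{\log}(\mathbb R^d)$), not a pair of bilinear operators. Two secondary soft spots: in the $H^{\log}$ estimate you only discuss absorbing the spatial factor $\log(e+|x_j|)$, but the coefficient $|g_{B_j}|$ also carries the scale factor $\log(e+\rho(x_j)/r_j)$, whose absorption by $\log(e+\mathfrak M T_L(f,g)(x))$ must be checked (it works, since $r_j\lesssim\rho(x_j)\lesssim(1+|x_j|)$ by Proposition \ref{Shen, Lemma 1.4}, but it is not addressed); and the identity $f\times g=S_L(f,g)+T_L(f,g)$ in $\mathcal S'$ is not a ``formal rearrangement,'' since $f\times g$ is defined by duality while your sums converge in $L^1$ and $H^{\log}$.

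The paper sidesteps the construction of a linear atomic decomposition entirely. It uses the Dziuba\'nski--Zienkiewicz partition of unity $\{\psi_{n,k}\}$ adapted to the critical radius $\rho$ and the manifestly linear operator $\mathfrak H(f)=\sum_{n,k}\bigl(\psi_{n,k}f-\varphi_{2^{-n/2}}*(\psi_{n,k}f)\bigr)$, which maps $H^1_L(\mathbb R^d)$ boundedly into the classical $H^1(\mathbb R^d)$ (Lemma \ref{key lemma H}). It then sets $S_L(f,g)=S(\mathfrak H(f),g)+\sum_{n,k}\bigl(\varphi_{2^{-n/2}}*(\psi_{n,k}f)\bigr)g$ and $T_L(f,g)=T(\mathfrak H(f),g)$, where $(S,T)$ is the classical bilinear pair of Theorem \ref{theorem of Bonami, Grellier and Ky}; bilinearity is then automatic. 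The smooth leftover pieces land in $H^1_L(\mathbb R^d)\subset L^1(\mathbb R^d)$ by Lemma \ref{the key lemma}, and the only additional input is that the classical $T$ obeys the bound with $\|g\|_{BMO_L}$ in place of $\|g\|_{BMO}+|g_{\mathbb Q}|$, which follows from $|g_{B(0,\rho(0))}|\leq C\|g\|_{BMO_L}$ as in (\ref{log-estimate}). If you want to salvage your route, you would need to import the wavelet construction of \cite{BGK} and adapt it to the local/cancellation structure of $H^1_L$, which is substantially more work than the reduction the paper performs.
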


Here $H^{\log}(\mathbb R^d)$ is a new kind of Hardy-Orlicz space consisting of all distributions $f$ such that  $\int_{\mathbb R^d} \frac{\mathfrak Mf(x)}{\log(e+ \mathfrak Mf(x)) + \log(e+ |x|)}dx<\infty$
with the  norm 
$$\|f\|_{H^{\log}}= \inf\left\{\lambda>0: \int_{\mathbb R^d} \frac{\frac{\mathfrak Mf(x)}{\lambda}}{\log\Big(e+ \frac{\mathfrak Mf(x)}{\lambda}\Big) + \log(e+ |x|)}dx\leq 1\right\}.$$
Recall that the grand maximal operator $\mathfrak M$ is defined by
\begin{equation}
\mathfrak Mf(x)= \sup_{\phi\in \mathcal A}\sup_{|y-x|<t}|f*\phi_t(y)|,
\end{equation}
where $\mathcal A= \{\phi\in \mathcal S(\mathbb R^d): |\phi(x)|+ |\nabla \phi(x)|\leq (1+|x|^2)^{-(d+1)}\}$ and $\phi_t(\cdot):= t^{-d}\phi(t^{-1}\cdot)$. 

Note that $H^{\log}(\mathbb R^d)\subset H^{\Xi}_{L,\sigma}(\mathbb R^{d})$ with continuous embedding, see Section \ref{the inclusion}. Compared with the main result of \cite{LP1} (Theorem \ref{the main theorem of Li and Peng}), our main result makes an essential improvement in two directions. The first one consists in proving that the space $H^{\Xi}_{L,\sigma}(\mathbb R^{d})$ can be replaced by a smaller space $H^{\log}(\mathbb R^d)$. Secondly, we give the bilinear decomposition (\ref{bilinear decomposition for the product space}) for the product space $H^1_L(\mathbb R^d)\times BMO_L(\mathbb R^d)$ instead of the linear decomposition (\ref{linear decomposition for the product space}) depending on $f\in H^1_L(\mathbb R^d)$. Moreover, we just need the $BMO_L$-norm (see (\ref{the uniform bound for bilinear decomposition})) instead of the $BMO_L^+$-norm as in (\ref{the uniform bound for linear decomposition}).

In applications to nonlinear PDEs, the distribution $f\times g\in \mathcal S'(\mathbb R^d)$ is used to justify weak continuity properties of the pointwise product $fg$. It is therefore important to recover $fg$ from the action of the distribution $f\times g$ on the test functions. An idea that naturally comes to mind is to look at the mollified distributions
\begin{equation}\label{bilinear decomposition theorem and distributions}
(f\times g)_\epsilon = (f\times g)*\phi_\epsilon,
\end{equation}
and let $\epsilon\to 0$. Here $\phi\in \mathcal S(\mathbb R^d)$ with $\int_{\mathbb R^d} \phi(x)dx=1$.

In the classical setting of $f\in H^1(\mathbb R^d)$ and $g\in BMO(\mathbb R^d)$, Bonami et al. proved in \cite{BIJZ} that the limit (\ref{bilinear decomposition theorem and distributions}) exists and equals $fg$ almost everywhere. An analogous result is also true for the Schr\"odinger setting. Namely, the following is true.

\begin{Theorem}\label{molified distributions}
Let $f\in H^1_L(\mathbb R^d)$ and $g\in BMO_L(\mathbb R^d)$. Then, for almost every $x\in\mathbb R^d$,
$$\lim_{\epsilon\to 0} (f\times g)_\epsilon(x) = f(x) g(x).$$
\end{Theorem}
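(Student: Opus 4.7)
My plan is to combine the bilinear decomposition from Theorem \ref{the main theorem} with an atomic density argument. Apply Theorem \ref{the main theorem} to split $f\times g = S_L(f,g) + T_L(f,g)$, with $S_L(f,g)\in L^1(\mathbb R^d)$ and $T_L(f,g)\in H^{\log}(\mathbb R^d)$. Since convolution distributes across the sum, it suffices to prove a.e.\ convergence for each summand separately. For $S_L(f,g)\in L^1$, Lebesgue's differentiation theorem yields $(S_L(f,g))*\phi_\epsilon(x)\to S_L(f,g)(x)$ for a.e.\ $x$. For $T_L(f,g)\in H^{\log}$, any $h\in H^{\log}$ has $\mathfrak Mh<\infty$ a.e.\ and $\sup_{\epsilon>0}|h*\phi_\epsilon(x)|\leq C\,\mathfrak Mh(x)$, so a standard density argument using finite atomic sums (which are bounded and compactly supported, hence trivially satisfy mollifier convergence) upgrades this pointwise domination to a.e.\ convergence $(T_L(f,g))*\phi_\epsilon(x)\to T_L(f,g)(x)$. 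Summing the two limits, $(f\times g)_\epsilon(x)\to F(x):=S_L(f,g)(x)+T_L(f,g)(x)$ for a.e.\ $x$.

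The remaining task is to identify $F$ with $fg$ almost everywhere. Select a sequence $(f_n)$ of finite linear combinations of $H^1_L$-atoms with $f_n\to f$ in $H^1_L$; such density is standard for atomic Hardy spaces associated with Schr\"odinger operators. Each $H^1_L$-atom $a$ is bounded and compactly supported, and since $g\in BMO_L\subset L^1_{\rm loc}(\mathbb R^d)$, the classical pointwise product $a\cdot g$ is locally integrable. Testing the duality-based definition against a Schwartz function $\varphi\in\mathcal S(\mathbb R^d)$, one sees that $\langle a\times g,\varphi\rangle=\int_{\mathbb R^d}a(y)g(y)\varphi(y)\,dy$, so $a\times g$ coincides with $a\cdot g$ as a distribution, and by linearity $f_n\times g = f_n\cdot g$ in $L^1_{\rm loc}(\mathbb R^d)$. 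The uniform bound (\ref{the uniform bound for bilinear decomposition}) applied to $f-f_n$ shows $S_L(f_n,g)+T_L(f_n,g)\to F$ in $L^1+H^{\log}$, whence, along a subsequence, $f_n\cdot g\to F$ almost everywhere. Meanwhile $f_n\to f$ in $L^1$, and a further subsequence converges a.e., so $f_n(x)g(x)\to f(x)g(x)$ wherever $g$ is defined and $f_n(x)\to f(x)$. Matching these two limits gives $F(x)=f(x)g(x)$ a.e., completing the proof.

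The main technical obstacle I foresee is justifying the a.e.\ mollifier convergence for the $H^{\log}$-summand: although the modular $\int_{\mathbb R^d}\mathfrak MT_L(f,g)(x)/[\log(e+\mathfrak MT_L(f,g)(x))+\log(e+|x|)]\,dx$ is finite by definition of the $H^{\log}$-norm, this does not force $\mathfrak MT_L(f,g)\in L^1_{\rm loc}$, so Lebesgue differentiation cannot be invoked directly. Passing through the atomic decomposition of $H^{\log}$ and using the pointwise bound $\sup_\epsilon|h*\phi_\epsilon|\leq C\,\mathfrak Mh$ to control the approximation error circumvents this and delivers the required a.e.\ convergence.
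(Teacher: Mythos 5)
Your proof is correct in outline but follows a genuinely different route from the paper. The paper does not touch the $H^{\log}$ part at all: it writes $f\times g=\mathfrak H(f)\times g+\sum_{n,k}\bigl(\varphi_{2^{-n/2}}*(\psi_{n,k}f)\bigr)g$, observes that the second piece lies in $H^1_L(\mathbb R^d)\subset L^1(\mathbb R^d)$ (so Lebesgue differentiation applies), and invokes Theorem 1.8 of \cite{BIJZ} to get $(\mathfrak H(f)\times g)_\epsilon\to \mathfrak H(f)g$ a.e., since $\mathfrak H(f)$ lies in the classical $H^1(\mathbb R^d)$; the identity $\mathfrak H(f)+\sum_{n,k}\varphi_{2^{-n/2}}*(\psi_{n,k}f)=f$ then finishes. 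You instead take the $S_L,T_L$ decomposition as a black box and essentially re-prove the Schr\"odinger analogue of BIJZ's Theorem 1.8 from scratch: an oscillation/density argument in $H^{\log}$ for the a.e.\ existence of the mollified limits, then an atomic approximation $f_n\to f$ in $H^1_L$ to identify that limit with $fg$. What your route buys is self-containedness and generality (it works for any bounded bilinear decomposition into $L^1+H^{\log}$ once $a\times g=ag$ for atoms); what it costs is that you must develop the pointwise convergence theory for $H^{\log}$ yourself, which the paper avoids by outsourcing to the classical result.

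One step you should spell out is the clause ``whence, along a subsequence, $f_n\cdot g\to F$ almost everywhere.'' Convergence of $T_L(f_n,g)$ to $T_L(f,g)$ in the $H^{\log}$ quasi-norm does not by itself give a.e.\ (or even in-measure) convergence of pointwise representatives, because $T_L(f,g)$ is a priori only a distribution in $H^{\log}$, not a locally integrable function, so $|u|\leq C\,\mathfrak Mu$ cannot be quoted directly for $u=T_L(f-f_n,g)$. The fix uses exactly the machinery you already set up: letting $\tilde T(x):=\lim_\epsilon T_L(f,g)*\phi_\epsilon(x)$ (which exists a.e.\ by your density argument) and noting that $T_L(f_n,g)=f_ng-S_L(f_n,g)\in L^1_{\rm loc}$, one passes to the limit $\epsilon\to0$ in $|T_L(f-f_n,g)*\phi_\epsilon(x)|\leq C\,\mathfrak M\bigl(T_L(f-f_n,g)\bigr)(x)$ to get $|\tilde T(x)-T_L(f_n,g)(x)|\leq C\,\mathfrak M\bigl(T_L(f-f_n,g)\bigr)(x)$ a.e.; the modular bound coming from $\|T_L(f-f_n,g)\|_{H^{\log}}\to0$ then yields convergence in measure on bounded sets, hence a.e.\ along a subsequence. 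With that inserted, your argument closes.
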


Throughout the whole paper, $C$ denotes a positive geometric constant which is independent of the main parameters, but may change from line to line. 

The paper is organized as follows. In Section 2, we present some notations and preliminaries about Hardy type spaces associated with $L$. Section 3 is devoted to prove that $H^{\log}(\mathbb R^d)\subset H^{\Xi}_{L,\sigma}(\mathbb R^{d})$ with continuous embedding. Finally, the proofs of Theorem \ref{the main theorem} and Theorem \ref{molified distributions} are given in Section 4.

{\bf Acknowledgements.} The author would like to thank  Aline Bonami and  Sandrine Grellier for many helpful suggestions and discussions.

\section{Some preliminaries and notations}\label{Some preliminaries and notations}

In this paper, we consider the Schr\"odinger differential operator
$$L= -\Delta+ V$$
 on $\mathbb R^d$, $d\geq 3$, where $V$ is a nonnegative potential, $V\ne 0$. As in the works of Dziuba\'nski et al \cite{DGMTZ, DZ}, we always assume that $V$ belongs to the reverse H\"older class $RH_{d/2}$. Recall that a nonnegative locally integrable function $V$ is said to belong to a reverse H\"older class $RH_q$, $1<q<\infty$, if  there exists $C>0$ such that
$$\Big(\frac{1}{|B|}\int_B (V(x))^q dx\Big)^{1/q}\leq \frac{C}{|B|}\int_B V(x) dx$$
holds for every balls $B$ in $\mathbb R^d$.

Let $\{T_t\}_{t>0}$ be the semigroup generated by $L$ and $T_t(x,y)$ be their kernels. Namely,
$$T_t f(x)=e^{-t L}f(x)=\int_{\mathbb R^d} T_t(x,y)f(y)dy,\quad f\in L^2(\mathbb R^d),\quad t>0.$$

We say that a function $f\in L^2(\mathbb R^d)$ belongs to the space $\mathbb H^1_L(\mathbb R^d)$ if 
$$\|f\|_{\mathbb H^1_L}:= \|\mathcal M_L f\|_{L^1}<\infty,$$
where $\mathcal M_L f(x):= \sup_{t>0}|T_t f(x)|$ for all $x\in \mathbb R^d$. The space $H^1_L(\mathbb R^d)$ is then defined as the completion of $\mathbb H^1_L(\mathbb R^d)$ with respect to this norm.

In \cite{DGMTZ} it was shown that the dual of $H^1_{L}(\mathbb R^d)$ can be identified with the space $BMO_L(\mathbb R^d)$ which consists of all functions $f\in BMO(\mathbb R^d)$ with
$$\|f\|_{BMO_L} := \|f\|_{BMO}+\sup_{\rho(x)\leq r}\frac{1}{|B(x,r)|}\int_{B(x,r)}|f(y)|dy<\infty,$$
where $\rho$ is  the auxiliary function defined as in \cite{Sh}, that is,
\begin{equation}
\rho(x)= \sup\Big\{r>0: \frac{1}{r^{d-2}}\int_{B(x,r)} V(y)dy\leq 1\Big\},
\end{equation}
$x\in \mathbb R^d$. Clearly, $0<\rho(x)<\infty$ for all $x\in \mathbb R^d$, and thus $\mathbb R^d=\bigcup_{n\in\mathbb Z}\mathcal B_n$, where the sets $\mathcal B_n$ are defined by
\begin{equation}
\mathcal B_n= \{x\in \mathbb R^d: 2^{-(n+1)/2}< \rho(x)\leq 2^{-n/2}\}.
\end{equation}

The following proposition is due to Shen \cite{Sh}.

\begin{Proposition}
 [see \cite{Sh}, Lemma 1.4] \label{Shen, Lemma 1.4}
There exist $C_0>1$ and $k_0\geq 1$ such that for all $x,y\in\mathbb R^d$,
$$C_0^{-1}\rho(x) \Big(1+ \frac{|x-y|}{\rho(x)}\Big)^{-k_0}\leq \rho(y)\leq C_0 \rho(x) \Big(1+ \frac{|x-y|}{\rho(x)}\Big)^{\frac{k_0}{k_0+1}}.$$
\end{Proposition}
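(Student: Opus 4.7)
The plan is to reformulate $\rho$ via the auxiliary function
$$\Psi(x,r):=\frac{1}{r^{d-2}}\int_{B(x,r)}V(z)\,dz,$$
so that $\rho(x)$ is characterized by $\Psi(x,\rho(x))=1$. Under the standing hypothesis $V\in RH_{d/2}$, $V\not\equiv 0$, the function $r\mapsto\Psi(x,r)$ is continuous, vanishes at $r=0$ and tends to infinity as $r\to\infty$, so $\rho(x)\in(0,\infty)$ is well defined. Comparing $\rho(x)$ and $\rho(y)$ then reduces to comparing $\Psi(x,\cdot)$ and $\Psi(y,\cdot)$ using the elementary inclusions
$$B(x,s-|x-y|)\subset B(y,s)\subset B(x,s+|x-y|).$$

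The central technical input is a two-sided polynomial control of $r\mapsto\Psi(x,r)$. The self-improvement of the reverse H\"older class upgrades $V\in RH_{d/2}$ to $V\in RH_q$ for some $q>d/2$, and H\"older's inequality applied on nested balls gives the \emph{lower growth}
$$\Psi(x,r_1)\leq C\Bigl(\tfrac{r_1}{r_2}\Bigr)^{\delta}\Psi(x,r_2)\qquad(0<r_1<r_2),\qquad \delta:=2-d/q>0.$$
Since $RH_{d/2}\subset A_\infty$, the measure $V(z)\,dz$ is doubling; iterating doubling and dividing by $r^{d-2}$ produces the \emph{upper growth}
$$\Psi(x,r_2)\leq C\Bigl(\tfrac{r_2}{r_1}\Bigr)^{k_0}\Psi(x,r_1)\qquad(0<r_1<r_2),$$
for a geometric exponent $k_0\geq 1$ depending only on $d$ and on the doubling constant of $V\,dz$; this $k_0$ is the one that appears in the final statement, taken large enough to accommodate both bounds below.

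Setting $R:=\rho(x)$, the two-sided comparison then follows by testing candidate radii in the defining condition $\Psi(y,s)=1$. When $|x-y|\lesssim R$, the inclusions are two-sided up to constants and the two growth estimates force $\rho(y)\sim R$, which is stronger than what the statement asks. When $|x-y|\gg R$, for the upper bound on $\rho(y)$ the inclusion $B(y,2|x-y|)\supset B(x,|x-y|)$ combined with the lower growth of $\Psi(x,\cdot)$ between $R$ and $|x-y|$ yields $\Psi(y,2|x-y|)\geq c(|x-y|/R)^{\delta}$; applying the upper growth of $\Psi(y,\cdot)$ between $\rho(y)$ and $2|x-y|$ then turns this into the constraint $\rho(y)\leq CR(|x-y|/R)^{1-\delta/k_0}$, and taking $k_0$ large enough ensures $1-\delta/k_0\leq k_0/(k_0+1)$. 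The lower bound on $\rho(y)$ is handled by the dual manipulation: combining $B(y,s)\subset B(x,s+|x-y|)$ with the upper growth of $\Psi(x,\cdot)$ at scale $|x-y|$ and then the lower growth of $\Psi(y,\cdot)$ at an intermediate scale shows $\Psi(y,s)<1$ for $s=cR(1+|x-y|/R)^{-k_0}$ with $c$ small, so $\rho(y)\geq s$.

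The main obstacle is the asymmetry of the two exponents $-k_0$ and $k_0/(k_0+1)$. The two sides of the comparison use different growth rates, the fractional exponent on the upper-bound side arises only from the balancing in the last calculation, and one has to select the $k_0$ in the final statement large enough so that both inequalities can be expressed with a common parameter. Once the two growth estimates are established and the bookkeeping of constants through the ball inclusions is set up, the rest of the argument is a routine case analysis in $|x-y|/\rho(x)$.
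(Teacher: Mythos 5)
The paper itself offers no proof of this proposition: it is imported verbatim from Shen's Lemma 1.4 in \cite{Sh}, so the only argument yours can be measured against is Shen's original one. Your overall strategy is the same as his: characterize $\rho(x)$ by $\Psi(x,\rho(x))=1$ with $\Psi(x,r)=r^{-(d-2)}\int_{B(x,r)}V$, derive the decay estimate $\Psi(x,r_1)\leq C(r_1/r_2)^{\delta}\Psi(x,r_2)$ for $r_1<r_2$ from the self-improved reverse H\"older inequality (with $\delta=2-d/q>0$) together with a polynomial upper growth bound from doubling, and then run the ball-inclusion and case analysis in $|x-y|/\rho(x)$. That skeleton is sound; one detail you should make explicit is that in the regime $|x-y|\gg\rho(x)$ you must first rule out $\rho(y)>2|x-y|$ before applying the upper growth of $\Psi(y,\cdot)$ between $\rho(y)$ and $2|x-y|$ (this does follow from the decay estimate, since $\rho(y)\geq 2|x-y|$ would force $\Psi(y,2|x-y|)\leq C$, contradicting the lower bound $\Psi(y,2|x-y|)\gtrsim(|x-y|/\rho(x))^{\delta}$ once $|x-y|/\rho(x)$ is large).

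The one step that is wrong as written is the final exponent comparison. You use the same symbol $k_0$ for the doubling exponent in the upper growth estimate and for the exponent in the statement, and you claim that taking $k_0$ large enough ensures $1-\delta/k_0\leq k_0/(k_0+1)$. That inequality is equivalent to $k_0\leq \delta/(1-\delta)$, so it holds only for \emph{small} $k_0$, and when $\delta<1/2$ --- the typical case, since the self-improvement of $RH_{d/2}$ only yields some $q$ slightly larger than $d/2$ --- it fails for every $k_0\geq 1$. The repair is to decouple the two exponents: writing $\mu$ for the doubling exponent, your computation gives $\rho(y)\leq C\rho(x)\bigl(|x-y|/\rho(x)\bigr)^{1-\delta/\mu}$ and the dual computation gives $\rho(y)\geq c\,\rho(x)\bigl(|x-y|/\rho(x)\bigr)^{-(\mu/\delta-1)}$; since $k_0/(k_0+1)\geq 1-\delta/\mu$ exactly when $k_0\geq \mu/\delta-1$, any choice $k_0\geq\max\{1,\mu/\delta-1\}$ serves for both inequalities, and enlarging $k_0$ only weakens them. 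Alternatively, Shen sidesteps this bookkeeping: he proves only the polynomial bound $\rho(y)\geq C^{-1}\rho(x)\bigl(1+|x-y|/\rho(x)\bigr)^{-k_0}$ directly, and then obtains the upper bound with exponent $k_0/(k_0+1)$ formally, by applying that bound with $x$ and $y$ interchanged and solving the resulting inequality for $\rho(y)$, which produces the fractional exponent with the same $k_0$ automatically.
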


Here and in what follows, we denote by $\mathcal C_L$ the $L$-constant 
\begin{equation}\label{technique constant}
\mathcal C_L= 8. 9^{k_0}C_0
\end{equation}
where $k_0$ and $C_0$ are defined as in Proposition \ref{Shen, Lemma 1.4}.

\begin{Definition}
Given $1<q\leq \infty$. A function $a$ is called a $(H^1_L,q)$-atom related to the ball $ B(x_0,r)$ if $r\leq  \mathcal C_L \rho(x_0) $ and

i) supp $a\subset B(x_0,r)$,

ii) $\|a\|_{L^q}\leq |B(x_0,r)|^{1/q-1}$,

iii) if $r\leq \frac{1}{\mathcal C_L}\rho(x_0)$ then $\int_{\mathbb R^d}a(x)dx=0$.
\end{Definition}

The following atomic characterization of $H^1_L(\mathbb R^d)$ is due to Dziuba\'nski and Zienkiewicz \cite{DZ}.

\begin{theorema}
Let $1<q\leq \infty$. A function $f$ is in $H^1_L(\mathbb R^d)$ if and only if it can be written as $f=\sum_j \lambda_j a_j$, where $a_j$ are $(H^1_L,q)$-atoms and $\sum_j |\lambda_j|<\infty$. Moreover, there exists $C>1$ such that for every $f\in H^1_L(\mathbb R^d)$, we have
$$C^{-1}\|f\|_{H^1_L}\leq \inf\left\{\sum_j |\lambda_j|: f=\sum_j \lambda_j a_j\right\}\leq C\|f\|_{H^1_L}.$$
\end{theorema}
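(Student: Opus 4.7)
The plan is to establish both directions of the equivalence, following the general philosophy of atomic decompositions for Hardy spaces adapted to the localized structure coming from the auxiliary function $\rho$.

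\textbf{Easy direction (atoms generate $H^1_L$).} First I would show that every $(H^1_L,q)$-atom $a$ associated to a ball $B(x_0,r)$ satisfies $\|\mathcal M_L a\|_{L^1}\leq C$ uniformly. The key tool is the Dziuba\'nski--Zienkiewicz upper bound for the semigroup kernel, namely an estimate of the form
$$0\leq T_t(x,y)\leq C t^{-d/2} e^{-c|x-y|^2/t}\Big(1+ \frac{\sqrt t}{\rho(x)}+\frac{\sqrt t}{\rho(y)}\Big)^{-N}$$
for any $N$, together with a gradient-in-$y$ estimate of the same flavor. I would split the integral of $\mathcal M_L a$ into the ``near'' region $|x-x_0|\leq 2r$ and the ``far'' region. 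On the near region, the $L^q$ boundedness of $\mathcal M_L$ and Hölder combined with the size estimate of $a$ give an $O(1)$ contribution. On the far region, if $r\leq \frac{1}{\mathcal C_L}\rho(x_0)$ one exploits the cancellation of $a$ by replacing $T_t(x,y)$ with $T_t(x,y)-T_t(x,x_0)$ and applying the gradient bound; if instead $r$ is comparable to $\rho(x_0)$ there is no cancellation, and integrability comes from the extra decay factor $(1+\sqrt t/\rho)^{-N}$ in the heat kernel. Summability over an atomic decomposition $f=\sum_j\lambda_j a_j$ then follows from the triangle inequality in $L^1$.

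\textbf{Hard direction (Calderón--Zygmund construction of atoms).} For $f\in \mathbb H^1_L$ I would mimic the classical CZ decomposition of the maximal function. Set $\Omega_k=\{\mathcal M_L f>2^k\}$ and take a Whitney covering of $\Omega_k$ by balls $B_{k,j}$ of radius $r_{k,j}\approx \mathrm{dist}(B_{k,j},\Omega_k^c)$, but \emph{truncated} so that $r_{k,j}\leq \mathcal C_L\rho(x_{k,j})$; Proposition \ref{Shen, Lemma 1.4} guarantees that this truncation produces only a controlled number of subdivisions and that $\rho$ is essentially constant on each sub-ball. A smooth partition of unity $\{\eta_{k,j}\}$ subordinate to $\{B_{k,j}\}$ yields localized pieces $b_{k,j}$. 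One then chooses constants $c_{k,j}$: when $r_{k,j}\leq \frac{1}{\mathcal C_L}\rho(x_{k,j})$ one takes $c_{k,j}$ equal to the $\eta_{k,j}$-weighted mean of $f$, so that $b_{k,j}=(f-c_{k,j})\eta_{k,j}$ has vanishing integral; when instead $r_{k,j}\approx \rho(x_{k,j})$ one takes $c_{k,j}=0$, producing an atom without cancellation. A telescoping argument between consecutive levels $k$ and $k+1$ organises the $b_{k,j}$ into multiples of $(H^1_L,q)$-atoms, and the size bound $\sum_{k,j}|\lambda_{k,j}|\lesssim \sum_k 2^k|\Omega_k|\lesssim \|\mathcal M_L f\|_{L^1}$ follows from the layer-cake formula. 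Density of $\mathbb H^1_L$ in $H^1_L$ passes the decomposition to the completion.

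\textbf{Independence of $q$.} Because the construction in the hard direction naturally produces $L^\infty$-atoms (the $b_{k,j}$ are bounded by $C\cdot 2^k$), these are automatically $(H^1_L,q)$-atoms for every $1<q\leq\infty$ up to a uniform constant. The converse inclusion — that the atomic norm associated with any $q$ is equivalent — is a trivial consequence.

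\textbf{Main obstacle.} The delicate point is the interplay between the Whitney geometry on $\Omega_k$ and the $\rho$-truncation of atomic radii. Concretely, when the natural Whitney radius exceeds $\mathcal C_L\rho(x_{k,j})$, one must subdivide into balls of radius $\sim \rho$ while still keeping the bounded-overlap property and a workable partition of unity, and one must verify that the constants $c_{k,j}$ used for subdivided pieces (where cancellation is dropped) still yield atoms with the right $L^q$ size. This is where the precise definition of $\mathcal C_L=8\cdot 9^{k_0}C_0$ and the slow-variation estimate of Proposition \ref{Shen, Lemma 1.4} are used critically, to guarantee that $\rho(y)\approx \rho(x_{k,j})$ uniformly on each $B_{k,j}$ so that the truncated atoms genuinely satisfy the size and support requirements.
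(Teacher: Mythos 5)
First, a point of orientation: the paper does not prove this statement at all --- it is quoted as ``Theorem A'' and attributed to Dziuba\'nski and Zienkiewicz \cite{DZ}, so there is no in-paper proof to compare against. Judged on its own terms, your sketch of the ``easy'' direction (uniform $H^1_L$-bounds for $(H^1_L,q)$-atoms via the Gaussian upper bound with the extra factor $(1+\sqrt t/\rho)^{-N}$, cancellation for small atoms, decay of the kernel for atoms at scale $\rho$) is the standard argument and is sound in outline.

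The ``hard'' direction is where I cannot accept the proposal as a proof. You propose a global Calder\'on--Zygmund/Latter construction on the level sets of $\mathcal M_Lf$ with Whitney balls truncated at radius $\mathcal C_L\rho$, and you yourself flag the interaction between the Whitney geometry and the $\rho$-truncation as ``the main obstacle'' --- but naming the obstacle is not overcoming it, and that obstacle is essentially the entire content of the theorem. Concretely: (i) a Whitney ball of radius $R\gg\rho$ must be subdivided into roughly $(R/\rho)^d$ balls of radius $\sim\rho$, which is not a ``controlled number,'' so the bounded-overlap and coefficient-summation claims need a genuine argument, not an appeal to Proposition \ref{Shen, Lemma 1.4}; (ii) in the telescoping step the pieces of $g_{k+1}-g_k$ supported in small balls (radius $\leq\rho/\mathcal C_L$) must be corrected to restore vanishing integrals, and you give no mechanism for organizing these corrections consistently across the two different truncated coverings at levels $k$ and $k+1$. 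The actual proof in \cite{DZ} avoids this entirely by a different route, which the present paper's Section 4 in fact records: one fixes once and for all the covering $B_{n,k}=B(x_{n,k},2^{-n/2})$ of $\mathcal B_n$ and the partition of unity $\psi_{n,k}$ (Lemmas \ref{DZ, Lemma 2.3} and \ref{DZ, Lemma 2.5}), proves the localization estimate $\sum_{n,k}\|\psi_{n,k}f\|_{h^1_n}\leq C\|f\|_{H^1_L}$ (Lemma \ref{DZ, 4.7}), and then invokes Goldberg's atomic decomposition of the \emph{local} Hardy space $h^1_n$ at scale $2^{-n/2}$ (this is exactly Lemma \ref{Ky2, Lemma 6.5}), where atoms of radius comparable to the localization scale need no moment condition. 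That reduction to local Hardy spaces is the idea your proposal is missing; with it, the $\rho$-truncation issue never arises.
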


Let $1\leq q<\infty$. A nonnegative locally integrable function $w$ belongs to the {\sl Muckenhoupt class} $A_q$, say $w\in A_q$, if there exists a positive constant $C$ so that
\begin{equation}\label{Muckenhoupt class}
\frac{1}{|B|}\int_B w(x)dx\Big(\frac{1}{|B|}\int_B (w(x))^{-1/(q-1)}dx\Big)^{q-1}\leq C, \quad\mbox{if}\; 1<q<\infty,
\end{equation}
and 
\begin{equation}
\frac{1}{|B|}\int_B w(x)dx\leq C \mathop{\mbox{ess-inf}}\limits_{x\in B}w(x),\quad{\rm if}\; q=1,
\end{equation}
for all balls $B$ in $\mathbb R^d$. We say that $w\in A_\infty$ if $w\in A_q$ for some $q\in [1,\infty)$.

\begin{Remark}\label{the main weight}
The weight $\sigma (x)\equiv \frac{1}{\log(e+|x|)}$ belongs to the class $A_1$.
\end{Remark}

It is well known that $w\in A_p$, $1\leq p<\infty$, implies $w\in A_q$ for all $q >p$. For a measurable set $E$, we note  $w(E) =\int_E w(x) dx$ its weighted measure.

\begin{Definition}
Let $0<p\leq 1$. A function $\Phi$ is called a  growth function of order $p$ if it satisfies the following properties:

i) The function $\Phi$ is a Orlicz function, that is, $\Phi$ is a nondecreasing function with $\Phi(t)>0, t>0$, $\Phi(0)=0$  and $\lim_{t\to \infty}\Phi(t)=\infty$.

ii) The function $\Phi$ is of lower type $p$, that is, there exists a constant $C>0$ such that for every $s\in (0,1]$ and $t>0$,
$$\Phi(st)\leq C s^p \Phi(t).$$

iii) The function $\Phi$ is of upper type $1$, that is, there exists a constant $C>0$ such that for every $s\in [1,\infty)$ and $t>0$,
$$\Phi(st)\leq C s \Phi(t).$$
\end{Definition}

We will also say that $\Phi$ is a {\sl growth function} whenever it is a {\sl growth function} of some order $p<1$.

\begin{Remark}\label{sublinear property}
i) Let $\Phi$ be a growth function. Then, there exists a constant $C>0$ such that 
$$\Phi\Big(\sum_{j=1}^\infty t_j\Big)\leq C \sum_{j=1}^\infty \Phi(t_j)$$
for every sequence $\{t_j\}_{j\geq 1}$ of nonnegative real numbers. See Lemma 4.1 of \cite{Ky1}.

ii) The function $\Xi (t)\equiv \frac{t}{\log(e+t)}$ is a growth function of order $p$ for any $p\in (0,1)$.
\end{Remark}

Now, let us define weighted Hardy-Orlicz spaces associated with $L$.

\begin{Definition}\label{the definition of weighted Hardy-Orlicz spaces}
Given $w\in A_\infty$ and $\Phi$  a growth function. We say that a function $f\in L^2(\mathbb R^d)$ belongs to  $\mathbb H^\Phi_{L,w}(\mathbb R^d)$ if $\int_{\mathbb R^d} \Phi(\mathcal M_Lf(x))w(x)dx<\infty$. The space $H^\Phi_{L,w}(\mathbb R^d)$ is  defined as the completion of  $\mathbb H^\Phi_{L,w}(\mathbb R^d)$ with respect to the norm
$$\|f\|_{ H^\Phi_{L,w}}:= \inf\left\{\lambda>0: \int_{\mathbb R^d} \Phi\Big(\frac{\mathcal M_Lf(x)}{\lambda}\Big)w(x)dx\leq 1\right\}.$$
\end{Definition}

Remark that when $w(x)\equiv 1$ and $\Phi(t)\equiv t$, the space $H^\Phi_{L,w}(\mathbb R^d)$ is just $H^1_L(\mathbb R^d)$. We refer the reader to the recent work of D. Yang and S. Yang \cite{YY} for a complete study of the theory of weighted Hardy-Orlicz spaces associated with operators.

\section{The inclusion $H^{\log}(\mathbb R^d)\subset H^{\Xi}_{L,\sigma}(\mathbb R^{d})$}\label{the inclusion}

The purpose of this section is to establish the following embedding.

\begin{Proposition}\label{the embedding}
$H^{\log}(\mathbb R^d)\subset H^{\Xi}_{L,\sigma}(\mathbb R^{d})$ and the inclusion is continuous.
\end{Proposition}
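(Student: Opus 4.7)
The plan is to compare the two modular integrals pointwise for functions in $L^2(\mathbb R^d)$ and then pass to Luxemburg norms using the lower-type property of $\Xi$. Two ingredients drive the argument: a pointwise bound $\mathcal M_L f \le C\,\mathfrak M f$ and an elementary inequality between the two Orlicz-type densities. The inclusion for general distributions then follows by density.

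First I would prove $\mathcal M_L f(x) \le C\,\mathfrak M f(x)$ for every $f \in L^2(\mathbb R^d)$ and $x \in \mathbb R^d$. Fix $t > 0$, set $s := \sqrt t$, and rewrite the semigroup action as a convolution at scale $s$: the function $\psi^{(x,t)}(z) := s^d\,T_t(x, x - sz)$ satisfies $T_t f(x) = (\psi^{(x,t)}_s * f)(x)$. Using the Gaussian upper bound $|T_t(x,y)| \le C\,t^{-d/2} e^{-c|x-y|^2/t}$ together with the corresponding bound on $\nabla_y T_t(x,y)$ (which hold for $V \in RH_{d/2}$, $V \ge 0$, by analyticity of $\{T_t\}_{t>0}$ and standard heat-kernel estimates for Schr\"odinger operators), one verifies $|\psi^{(x,t)}(z)| + |\nabla \psi^{(x,t)}(z)| \le C(1+|z|^2)^{-(d+1)}$ uniformly in $x,t$, so that $\psi^{(x,t)}/C \in \mathcal A$. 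Taking $y = x$ in the definition of the grand maximal function then yields $|T_t f(x)| \le C\,\mathfrak M f(x)$, uniformly in $t > 0$.

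Second, since $\log(e+a),\log(e+b) \ge 1$ for $a,b \ge 0$, one has $\log(e+a)\log(e+b) \ge \tfrac12[\log(e+a) + \log(e+b)]$, and hence the density $\Xi(\tau)\sigma(x) = \tfrac{\tau}{\log(e+\tau)\log(e+|x|)}$ is bounded by twice the $H^{\log}$-density. Combined with the pointwise bound above and the upper type $1$ of $\Xi$,
\begin{equation*}
\int_{\mathbb R^d} \Xi(\mathcal M_L f(x))\sigma(x)\,dx \;\le\; C\int_{\mathbb R^d} \frac{\mathfrak M f(x)}{\log(e+\mathfrak M f(x)) + \log(e+|x|)}\,dx.
\end{equation*}
Applied to $f/\lambda$ with $\lambda = \|f\|_{H^{\log}}$, the right-hand side is at most $C$. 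Using the lower type $p$ of $\Xi$ (valid for any $p \in (0,1)$), rescaling by $\mu = (Cc_p)^{1/p}\lambda$ forces the $\Xi$-modular of $f/\mu$ to be at most $1$, which yields $\|f\|_{H^{\Xi}_{L,\sigma}} \le C'\|f\|_{H^{\log}}$.

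The main obstacle is the first step, specifically the gradient estimate $|\nabla_y T_t(x,y)| \le C\,t^{-(d+1)/2} e^{-c|x-y|^2/t}$ needed to place the rescaled kernel $\psi^{(x,t)}$ into (a multiple of) the testing class $\mathcal A$. The size bound follows at once from Feynman-Kac, but the gradient bound under the sole assumption $V \in RH_{d/2}$ requires additional work based on analyticity of the semigroup and interior regularity for $-\Delta + V$; this is however standard in the theory of Schr\"odinger heat kernels, and no tool beyond the preliminaries of Section \ref{Some preliminaries and notations} is needed.
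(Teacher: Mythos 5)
Your overall strategy (a pointwise bound $\mathcal M_L f\le C\,\mathfrak M f$, followed by the elementary comparison of the two Orlicz densities) would indeed give a much shorter proof than the paper's, and your second step is fine: since $\log(e+a)\log(e+b)\ge\frac12(\log(e+a)+\log(e+b))$ for $a,b\ge 0$, the $\Xi_\sigma$-modular is dominated by twice the $H^{\log}$-modular, and the passage to Luxemburg norms via the lower type of $\Xi$ is routine. The problem is your first step, and you have put your finger on it yourself but then waved it away: the gradient estimate $|\nabla_y T_t(x,y)|\le C\,t^{-(d+1)/2}e^{-c|x-y|^2/t}$ is \emph{not} available under the sole hypothesis $V\in RH_{d/2}$. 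For Schr\"odinger operators this kind of first-order regularity of the kernel is known only under the stronger assumption $V\in RH_q$ with $q\ge d$ (this is already visible in Shen's estimates for the fundamental solution, where the gradient bound requires $RH_d$). Under $RH_{d/2}$ the only regularity one has is the H\"older-type estimate of order $\delta=\delta(V)\in(0,1]$ recorded in the paper as Lemma \ref{LP1, Lemma 2}. Consequently the rescaled kernel $\psi^{(x,t)}$ cannot be placed in (a multiple of) the $C^1$ testing class $\mathcal A$, and the pointwise domination $\mathcal M_Lf\le C\,\mathfrak Mf$ does not follow. Nor can it be rescued cheaply: with only H\"older regularity of the kernel, controlling $\int T_t(x,y)f(y)\,dy$ by $\mathfrak Mf(x)$ genuinely requires exploiting cancellation of $f$, which a pointwise inequality valid for all $f\in L^2$ cannot see.

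This is precisely why the paper takes a different route: it first observes the trivial continuous inclusion $H^{\log}\subset H^{\Xi}_\sigma$ (your density comparison), and then reduces the Proposition to the norm inequality $\|f\|_{H^\Xi_{L,\sigma}}\le C\|f\|_{H^\Xi_\sigma}$, proved through the atomic decomposition of $H^\Xi_\sigma$ into multiples of cancellative $(H^\Xi_\sigma,2)$-atoms. The key modular estimate (Lemma \ref{the key lemma for the inclusion}) is then proved atom by atom: near the supporting ball one uses $\mathcal M_Lf\le C\mathcal Mf$ and the $L^q_\sigma$-boundedness of the Hardy--Littlewood maximal operator, while far from the ball one combines the H\"older continuity of $T_t(x,\cdot)$ from Lemma \ref{LP1, Lemma 2} with the vanishing integral of the atom to get the decay $|x-x_0|^{-(d+\delta)}$, which is what makes the modular integral converge (decay of order $|x-x_0|^{-d}$ alone would not suffice against the weight $\sigma$). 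To repair your argument you would either need to assume $V\in RH_d$, which changes the theorem, or reintroduce the atomic decomposition and cancellation, at which point you have essentially reconstructed the paper's proof.
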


Recall (see \cite{Ky1}) that the weighted Hardy-Orlicz space $H^{\Xi}_{\sigma}(\mathbb R^d)$ is defined as the space of all distributions $f$ such that  $\int_{\mathbb R^d} \frac{\mathfrak Mf(x)}{\log(e+ \mathfrak Mf(x))}\frac{1}{\log(e+ |x|)}dx<\infty$
with the  norm 
$$\|f\|_{H^{\Xi}_{\sigma}}= \inf\left\{\lambda>0: \int_{\mathbb R^d} \frac{\frac{\mathfrak Mf(x)}{\lambda}}{\log\Big(e+ \frac{\mathfrak Mf(x)}{\lambda}\Big)}\frac{1}{\log(e+ |x|)}dx\leq 1\right\}.$$

Clearly, $H^{\log}(\mathbb R^d)\subset H^{\Xi}_{\sigma}(\mathbb R^{d})$ and the inclusion is continuous. Consequently, {\sl the proof of Proposition \ref{the embedding}} can be reduced to showing that for every $f\in H^{\Xi}_{\sigma}(\mathbb R^{d})$,
\begin{equation}\label{the main estimate}
\|f\|_{H^{\Xi}_{L,\sigma}}\leq C \|f\|_{H^{\Xi}_{\sigma}}.
\end{equation}

Let $1<q\leq\infty$. Recall (see \cite{Ky1}) that a function $a$ is called a $(H^\Xi_\sigma,q)$-atom related to the ball $B$ if

i) supp $a\subset B$,

ii) $\|a\|_{L^q_\sigma}\leq \sigma(B)^{1/q}\Xi^{-1}(\sigma(B)^{-1})$, where $\Xi^{-1}$ is the inverse function of $\Xi$,

iii) $\int_{\mathbb R^d}a(x)dx=0$.

In order to prove Proposition \ref{the embedding}, we need the following lemma.

\begin{Lemma}\label{the key lemma for the inclusion}
Let $1<q<\infty$. Then, 
\begin{equation}\label{key lemma}
\int_{\mathbb R^d} \Xi(\mathcal M_L f(x))\sigma(x)dx\leq C \sigma(B) \Xi(\sigma(B)^{-1/q}\|f\|_{L^q_{\sigma}})
\end{equation}
for every $f$ multiples of $(H^\Xi_\sigma,q)$-atom related to the ball $B=B(x_0,r)$, 
\end{Lemma}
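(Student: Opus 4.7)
The plan is to decompose $\mathbb R^d = 2B \cup (2B)^c$ and estimate the two pieces by techniques standard in the weighted Hardy-Orlicz setting.

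For the near part $\int_{2B} \Xi(\mathcal M_L f)\sigma\,dx$, I would first apply Jensen's inequality (the lower type $p<1$ and upper type $1$ of $\Xi$ give the necessary concavity-like behavior) to move $\Xi$ outside:
$$\int_{2B} \Xi(\mathcal M_L f)\,\sigma\,dx \leq C\sigma(2B)\,\Xi\!\left(\frac{1}{\sigma(2B)}\int_{2B} \mathcal M_L f \cdot \sigma \, dx\right).$$
Then I would bound the inner $L^1_\sigma$ integral by H\"older and the $L^q_\sigma$-boundedness of $\mathcal M_L$; the latter follows from the pointwise Gaussian domination $\mathcal M_L f(x) \leq C Mf(x)$ (a consequence of $V\geq 0$ via Feynman--Kac) together with $\sigma \in A_1 \subset A_q$. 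Finally I would invoke the $A_1$-doubling $\sigma(2B) \leq C\sigma(B)$ and the upper type $1$ of $\Xi$ to produce the target $C\sigma(B)\,\Xi(\sigma(B)^{-1/q}\|f\|_{L^q_\sigma})$.

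For the far part $\int_{(2B)^c} \Xi(\mathcal M_L f)\sigma\,dx$, the crucial input is a pointwise decay estimate for $\mathcal M_L f$ that exploits the cancellation $\int f = 0$ from condition (iii). Writing $T_tf(x) = \int_B [T_t(x,y)-T_t(x,x_0)]f(y)\,dy$, and combining the H\"older continuity of $T_t(x,y)$ in the $y$-variable with the Gaussian off-diagonal bound, and then optimizing in $t$, I expect to obtain
$$\mathcal M_L f(x) \leq C\,\frac{r^\delta\,\|f\|_{L^1}}{|x-x_0|^{d+\delta}}, \qquad x \in (2B)^c,$$
for some $\delta > 0$. Using H\"older and the $A_q$ condition on $\sigma$, one has $\|f\|_{L^1} \leq C\,\|f\|_{L^q_\sigma}\,|B|\,\sigma(B)^{-1/q}$, so on the annulus $A_k = 2^{k+1}B \setminus 2^k B$ the maximal function is of size $C\,\sigma(B)^{-1/q}\|f\|_{L^q_\sigma}\,2^{-k(d+\delta)}$. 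The lower type $p$ of $\Xi$, chosen so that $(d+\delta)p > d$, plus the doubling $\sigma(2^k B) \leq C\,2^{kd}\sigma(B)$, then produces a geometric series $\sum_k 2^{-k((d+\delta)p-d)}$ which converges and sums to the desired bound $C\sigma(B)\,\Xi(\sigma(B)^{-1/q}\|f\|_{L^q_\sigma})$.

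The main obstacle is the far-field pointwise inequality, since it rests on H\"older continuity of the Schr\"odinger heat kernel $T_t(x,y)$ in the spatial variables under only the hypothesis $V \in RH_{d/2}$; I would invoke this as a known estimate from the Dziuba\'nski--Zienkiewicz theory rather than re-deriving it. A secondary technicality is the calibration of the lower-type exponent $p$ of $\Xi$ against the H\"older exponent $\delta$ to make the dyadic sum converge, but taking $p$ sufficiently close to $1$ (which is possible since $\Xi$ is of lower type $p$ for \emph{every} $p < 1$ by Remark \ref{sublinear property}(ii)) renders this automatic.
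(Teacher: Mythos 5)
Your proposal follows essentially the same route as the paper: the local part is handled via $\mathcal M_L f\leq C\mathcal Mf$, the $L^q_\sigma$-boundedness of $\mathcal M$, H\"older's inequality and the doubling of $\sigma\in A_1$ (the paper replaces your Jensen step by the equivalent observation that $t\mapsto \Xi(t)/t$ is nonincreasing), while the far part uses exactly the cancellation of the atom together with the H\"older continuity of the kernel $T_t(x,y)$ (Lemma 2 of Li--Peng) and a lower-type exponent $p$ of $\Xi$ calibrated so that $(d+\delta)p>d$, your dyadic-annulus summation being the explicit form of the weighted integral bound the paper cites from Garc\'ia-Cuerva--Rubio de Francia. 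The argument is correct and no substantive difference from the paper's proof needs to be recorded.
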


To prove Lemma \ref{the key lemma for the inclusion}, let us recall the following.

\begin{Lemma}[see \cite{LP1}, Lemma 2]\label{LP1, Lemma 2}
Let $V\in RH_{d/2}$. Then, there exists $\delta>0$ depends only on $L$, such that for every $|y-z|<|x-y|/2$ and $t>0$, we have
$$|T_t(x,y)- T_t(x,z)|\leq C\Big(\frac{|y-z|}{\sqrt t}\Big)^{\delta}t^{-\frac{d}{2}}e^{-\frac{|x-y|^2}{t}}\leq C\frac{|y-z|^{\delta}}{|x-y|^{d+\delta}}.$$
\end{Lemma}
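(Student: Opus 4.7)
The plan is a two-stage argument: first prove the analogous Hölder estimate for the free heat kernel $P_t(x,y)=(4\pi t)^{-d/2}e^{-|x-y|^2/(4t)}$, then transfer it to $T_t$ by Duhamel perturbation. The pointwise upper bound $0\le T_t(x,y)\le P_t(x,y)$ is already available (Feynman--Kac domination, since $V\ge 0$); what needs work is the Hölder regularity in the second variable.

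For the free kernel, direct differentiation gives $|\nabla_y P_t(x,y)|=\frac{|x-y|}{2t}P_t(x,y)\le Ct^{-1/2-d/2}e^{-c|x-y|^2/t}$ for any $c<1/4$, after absorbing the linear factor $|x-y|/\sqrt t$ into a slightly weaker Gaussian. Since $|y-z|<|x-y|/2$ keeps the whole segment $[y,z]$ at distance $\gtrsim |x-y|$ from $x$, the mean value theorem yields the desired bound for $P_t$ with $\delta=1$. To transfer it to $T_t$ I would write the Duhamel identity
$$T_t(x,y)=P_t(x,y)-\int_0^t\!\!\int_{\bR^d}T_s(x,w)\,V(w)\,P_{t-s}(w,y)\,dw\,ds,$$
take the difference in $y$, apply the stage-one bound to $P_{t-s}(w,y)-P_{t-s}(w,z)$ in the region $|w-y|>2|y-z|$, and use the triangle inequality in the complementary region. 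The resulting $V$-dependent integrals are controlled by the Fefferman--Shen estimate $\int_B V\le Cr^{d-2}$ for balls of radius up to $\rho(x)$, which follows from $V\in RH_{d/2}$ together with its self-improvement to $V\in RH_{d/2+\varepsilon}$.

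The main obstacle is the exponent loss. Iterating the $\delta=1$ bound naively through the Duhamel integral produces a divergent time integral at $s=0$; to rescue it one must trade a positive part of the Hölder exponent against the integrability of $V$, which forces some $\delta\in(0,1)$ depending only on the reverse-Hölder exponent of $V$. Tracking that this $\delta$ is uniform, and that the bound survives both the small-time regime $t\ll\rho(x)^2$ (where the potential is a genuine perturbation of $-\Delta$) and the large-time regime $t\gtrsim\rho(x)^2$ (where one uses the exponential decay built into the Schrödinger semigroup via Shen's auxiliary function), is the principal technical burden. A cleaner alternative I would consider if the perturbation bookkeeping becomes unwieldy is to bypass Duhamel entirely and apply parabolic De Giorgi--Nash--Moser to $u(s,w)=T_s(x,w)$, viewed as a weak solution of $(\partial_s+L_w)u=0$: the self-improved integrability $V\in L^{d/2+\varepsilon}_{\mathrm{loc}}$ is exactly what Moser iteration needs to produce a quantitative local Hölder modulus, which combined with the Gaussian envelope from stage one yields the stated estimate.
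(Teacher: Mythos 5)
This lemma is imported verbatim from Li and Peng (\cite{LP1}, Lemma 2) and the present paper gives no proof of it, so there is no in-paper argument to compare against; your proposal has to be judged against the standard proof in the cited literature (which goes back to Dziuba\'nski et al.\ and Kurata). Your two-stage plan --- gradient estimate for the free Gaussian kernel, then Duhamel perturbation $T_t=P_t-\int_0^t\int T_s(x,w)V(w)P_{t-s}(w,y)\,dw\,ds$ with the difference taken in $y$ --- is exactly the route those references take, and every ingredient you name (Feynman--Kac domination, the mean value theorem on the segment $[y,z]$, the self-improvement $RH_{d/2}\Rightarrow RH_{d/2+\varepsilon}$, the Fefferman--Phong-type bound on $\int_B V$) genuinely appears in that proof. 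So the approach is sound.

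The gap is that the one step carrying all the content is described but never carried out: you acknowledge it yourself as ``the principal technical burden.'' Concretely, after inserting the stage-one H\"older bound for $P_{t-s}(w,\cdot)$ into the Duhamel term, the argument lives or dies on the estimate
$$\int_{\bR^d} s^{-d/2}e^{-|w-y|^2/(cs)}\,V(w)\,dw\;\leq\; C\,s^{-1}\Big(\frac{\sqrt s}{\rho(y)}\Big)^{\delta_0},\qquad \sqrt s\leq\rho(y),$$
with some $\delta_0>0$ coming from $RH_{d/2+\varepsilon}$; it is this extra power of $\sqrt s/\rho(y)$, traded against a reduction of the H\"older exponent to some $\delta<\delta_0$, that makes the time integral $\int_0^t(t-s)^{-1+(\delta_0-\delta)/2}\,ds$ converge. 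Without stating and proving this (or an equivalent Kato-type bound), the divergence at the endpoint that you flag is not actually resolved. A second unaddressed point: the perturbation argument as you set it up only yields the claim for $\sqrt t\lesssim\rho(y)$; the regime $\sqrt t\gtrsim\rho(y)$ is not ``rescued by exponential decay'' automatically but requires an extra step (e.g.\ factoring $T_t=T_{t-\tau}T_\tau$ with $\tau\sim\rho(y)^2$ and combining the short-time H\"older bound with the decay $(1+\sqrt t/\rho(y))^{-N}$ of the full kernel). Your fallback via parabolic De Giorgi--Nash--Moser is a legitimate alternative and is essentially how Kurata proves the local H\"older modulus, but it too must be combined with the Gaussian envelope and made quantitative in $t$, which is again the part left unwritten.
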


\begin{proof}[Proof of Lemma \ref{the key lemma for the inclusion}]
First, note that $\sigma\in A_1$ and $\Xi$ is a {\sl growth function} of order $p$ for any $p\in (0,1)$, see Remark \ref{the main weight} and Remark \ref{sublinear property}. Denote by $\mathcal M$ the classical Hardy-Littlewood maximal operator. Then, the estimate $\mathcal M_Lf\leq C \mathcal Mf$, the $L^q_\sigma$-boundedness of $\mathcal M$ and H\"older inequality give
\begin{eqnarray}\label{key lemma 1}
&&\int_{B(x_0,2r)} \Xi(\mathcal M_Lf(x))\sigma(x)dx \nonumber\\
&\leq& C \int_{B(x_0,2r)} \Xi(\mathcal Mf(x)+ \sigma(B)^{-1/q}\|f\|_{L^q_\sigma})\sigma(x)dx  \nonumber\\
 &\leq& C \int_{B(x_0,2r)} \Big(\frac{\mathcal Mf(x)+ \sigma(B)^{-1/q}\|f\|_{L^q_\sigma}}{\sigma(B)^{-1/q}\|f\|_{L^q_\sigma}}\Big)\Xi(\sigma(B)^{-1/q}\|f\|_{L^q_\sigma})\sigma(x)dx\nonumber\\
&\leq& C \sigma(B) \Xi(\sigma(B)^{-1/q}\|f\|_{L^q_\sigma}),
\end{eqnarray}
where we used the facts that $t\mapsto \frac{\Xi(t)}{t}$ is nonincreasing and  $\sigma(B(x_0, 2r))\leq C\sigma(B)$.

Let $x\notin B(x_0,2r)$ and $t>0$. By Lemma \ref{LP1, Lemma 2} and (\ref{Muckenhoupt class}), 
\begin{eqnarray*}
|T_tf(x)|=\Big|\int_{\mathbb R^d} T_t(x,y) f(y)dy\Big|&=& \Big|\int_{B}(T_t(x,y)- T_t(x,x_0))f(y)dy\Big|\\
&\leq& C \int_B \frac{|y-x_0|^{\delta}}{|x-x_0|^{d+\delta}}|f(y)|dy\\
&\leq& C \sigma(B)^{-1/q}\|f\|_{L^q_\sigma}\frac{r^{d+\delta}}{|x-x_0|^{d+\delta}}
\end{eqnarray*}
Therefore, as $\Xi$ is of lower type $\frac{2d+\delta}{2(d+\delta)}<1$, 
\begin{eqnarray}\label{key lemma 2}
&&\int_{(B(x_0,2r))^c} \Xi(\mathcal M_Lf(x))\sigma (x)dx\nonumber\\
 &\leq& C \Xi(\sigma(B)^{-1/q}\|f\|_{L^q_\sigma})\int_{(B(x_0,2r))^c}\Big(\frac{r^{d+\delta}}{|x-x_0|^{d+\delta}}\Big)^{\frac{2d+\delta}{2(d+\delta)}}\sigma (x)dx\nonumber\\
&\leq& C \sigma(B) \Xi(\sigma(B)^{-1/q}\|f\|_{L^q_\sigma}),
\end{eqnarray}
where we used (see \cite{GR}, page 412)
$$\int_{(B(x_0, 2r))^c}\frac{r^{d+\delta/2}}{|x-x_0|^{d+\delta/2}}\sigma(x)dx\leq C \sigma(B(x_0,2r))\leq C \sigma(B).$$ 

Then, (\ref{key lemma}) follows from (\ref{key lemma 1}) and (\ref{key lemma 2}). This completes the proof.

\end{proof}

\begin{proof}[Proof of Proposition \ref{the embedding}]
As mentioned above, it is sufficient to show that
$$\|f\|_{H^{\Xi}_{L,\sigma}}\leq C \|f\|_{H^\Xi_\sigma}$$
for every $f\in H^\Xi_\sigma(\mathbb R^d)$. By Theorem 3.1 of \cite{Ky1}, there are multiples of $(H^\Xi_\sigma,2)$-atoms $b_j$, $j=1,2,...$, related to balls $B_j$ such that $f= \sum_{j=1}^\infty b_j$ and 
\begin{equation}\label{key proposition 1}
\Lambda_2(\{b_j\})\leq C \|f\|_{H^\Xi_\sigma},
\end{equation}
where
$$\Lambda_2(\{b_j\}):= \inf\left\{\lambda>0: \sum_{j=1}^\infty \sigma(B_j)\Xi\Big(\frac{\sigma(B_j)^{-1/2}\|b_j\|_{L^2_\sigma}}{\lambda}\Big)\leq 1 \right\}.$$

On the other hand, the estimate $\mathcal M_Lf\leq \sum_{j=1}^\infty \mathcal M_L(b_j)$, Remark \ref{sublinear property} and  Lemma \ref{the key lemma for the inclusion} give
\begin{eqnarray}
\int_{\mathbb R^d}  \Xi\Big(\frac{\mathcal M_Lf(x)}{\Lambda_2(\{b_j\})}\Big)\sigma(x)dx &\leq& C\sum_{j=1}^\infty \int_{\mathbb R^d}  \Xi\Big(\frac{\mathcal M_L(b_j)(x)}{\Lambda_2(\{b_j\})}\Big)\sigma(x)dx\nonumber\\
&\leq& C \sum_{j=1}^\infty \sigma(B_j)\Xi\Big(\frac{\sigma(B_j)^{-1/2}\|b_j\|_{L^q_\sigma}}{\Lambda_2(\{b_j\})}\Big)\nonumber\\
&\leq& C,\nonumber
\end{eqnarray}
which implies that $\|f\|_{H^\Xi_{L,\sigma}}\leq C \Lambda_2(\{b_j\})$. Therefore, (\ref{key proposition 1}) yields
$$\|f\|_{H^\Xi_{L,\sigma}}\leq C \|f\|_{H^\Xi_\sigma},$$
which completes the proof of Proposition \ref{the embedding}.

\end{proof}

\section{Proof of Theorem \ref{the main theorem} and Theorem \ref{molified distributions}}

Let $P(x)= (4\pi)^{-d/2} e^{-|x|^2/4}$ be the Gauss function. For $n\in\mathbb Z$, following \cite{DZ}, the space $h^1_n(\mathbb R^d)$  denotes the space of all integrable functions $f$ such that
$$\mathcal  M_nf(x) =\sup_{0<t<2^{-n}} |P_{\sqrt t}*f(x)|=\sup_{0<t<2^{-n}}\Big|\int_{\mathbb R^d} p_t(x,y) f(y)dy\Big| \in L^1(\mathbb R^d),$$
where the kernel $p_t$ is given by  $p_t(x,y)= (4\pi t)^{-d/2} e^{-\frac{|x-y|^2}{4t}}$. We equipped this space with the norm $\|f\|_{h^1_n}:= \|\mathcal M_n f\|_{L^1}$.

For convenience of the reader, we list here some lemmas used in our proofs.

\begin{Lemma}[see \cite{DZ}, Lemma 2.3] \label{DZ, Lemma 2.3}
There exists a constant $C>0$ and a collection of balls $B_{n,k}= B(x_{n,k}, 2^{-n/2})$, $n\in\mathbb Z, k=1,2,...$, such that $x_{n,k}\in \mathcal B_n$, $\mathcal B_n\subset \bigcup_k B_{n,k}$, and 
$$card \, \{(n',k'): B(x_{n,k}, R 2^{-n/2})\cap B(x_{n',k'}, R 2^{-n/2})\ne \emptyset\}\leq R^C$$
 for all $n,k$ and $R\geq 2$.
\end{Lemma}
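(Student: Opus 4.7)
The construction of the family $\{B_{n,k}\}$ is a standard Vitali/greedy selection done layer by layer. For each fixed $n\in\bZ$ I would extract a maximal subset $\{x_{n,k}\}\subset\mathcal B_n$ subject to the separation constraint $|x_{n,k}-x_{n,j}|\geq 2^{-n/2}/2$ for $k\neq j$. Maximality forces every point of $\mathcal B_n$ to lie within distance $2^{-n/2}/2$ of some $x_{n,k}$, which yields $\mathcal B_n\subset\bigcup_k B(x_{n,k},2^{-n/2})$; at the same time, the separation makes the quarter-size balls $B(x_{n,k},2^{-n/2}/4)$ pairwise disjoint, and this disjointness is the only structural feature I will draw on afterward.

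For the overlap count, fix $(n,k)$ and $R\geq 2$ and suppose $B(x_{n,k},R2^{-n/2})\cap B(x_{n',k'},R2^{-n/2})\neq\emptyset$, equivalently $|x_{n,k}-x_{n',k'}|<2R\cdot 2^{-n/2}$. Since $x_{n,k}\in\mathcal B_n$ and $x_{n',k'}\in\mathcal B_{n'}$, we have $\rho(x_{n,k})\sim 2^{-n/2}$ and $\rho(x_{n',k'})\sim 2^{-n'/2}$. Applying Proposition \ref{Shen, Lemma 1.4} with $x=x_{n,k}$, $y=x_{n',k'}$, and the resulting $|x-y|/\rho(x)\lesssim R$, I obtain the two-sided estimate
\begin{equation*}
c\,R^{-k_0}\,2^{-n/2}\leq \rho(x_{n',k'})\leq C\,R\,2^{-n/2},
\end{equation*}
which confines $n'-n$ to the interval $[-2\log_2 R-O(1),\,2k_0\log_2 R+O(1)]$ and in particular to at most $O(\log R)$ integer values.

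For each such admissible $n'$, the balls $\{B(x_{n',k'},2^{-n'/2}/4)\}_{k'}$ are pairwise disjoint, and any one of them whose center satisfies $|x_{n,k}-x_{n',k'}|<2R\,2^{-n/2}$ sits inside $B(x_{n,k},2R\,2^{-n/2}+2^{-n'/2}/4)$. A Lebesgue volume comparison then bounds the number of such $k'$ by $C\,(R\,2^{(n'-n)/2}+1)^d$, and on the admissible window one has $2^{(n'-n)/2}\leq C R^{k_0}$, so this per-scale count is at most $C\,R^{d(k_0+1)}$. Summing over the $O(\log R)$ admissible scales yields a total of at most $C\,R^{d(k_0+1)}\log R$, which is dominated by $R^{C}$ for $C$ large enough depending only on $d$ and $k_0$.

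The only non-routine step is the invocation of Proposition \ref{Shen, Lemma 1.4}: it is what converts a purely metric bound on $|x_{n,k}-x_{n',k'}|$ into a multiplicative bound on $\rho(x_{n',k'})/\rho(x_{n,k})$, and hence reduces an a priori unrestricted sum over the scales $n'\in\bZ$ to a logarithmic one. Everything else---Vitali selection and volume packing---is standard, and the polynomial exponent $C$ that emerges is essentially $d(k_0+1)$ plus a safety margin.
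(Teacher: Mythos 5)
Your argument is correct: the paper states this lemma without proof, importing it from Dziuba\'nski--Zienkiewicz \cite{DZ}, and your reconstruction (maximal $2^{-n/2}/2$-separated nets in each layer $\mathcal B_n$, then Shen's estimate from Proposition \ref{Shen, Lemma 1.4} to confine the admissible scales $n'$ to an $O(\log R)$ window, followed by a volume-packing count of $O(R^{d(k_0+1)})$ centers per scale) is essentially the argument given in that reference. No gaps; the only point worth double-checking is that the separated set in each layer is countable, which follows from the disjointness of the quarter-balls, as you note.
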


\begin{Lemma}[see \cite{DZ}, Lemma 2.5]\label{DZ, Lemma 2.5}
There are nonnegative $C^\infty$-functions $\psi_{n,k}$, $n\in\mathbb Z, k=1,2,...$, supported in the balls $B(x_{n,k}, 2^{1-n/2})$ such that
$$\sum_{n,k}\psi_{n,k}=1\quad\mbox{and}\quad \|\nabla \psi_{n,k}\|_{L^\infty}\leq C 2^{n/2}.$$
\end{Lemma}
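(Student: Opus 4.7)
The plan is to build the partition of unity in the standard way from the covering supplied by Lemma~\ref{DZ, Lemma 2.3}. For each pair $(n,k)$, pick a smooth cutoff $\widetilde\psi_{n,k}\in C_c^\infty(\mathbb R^d)$ with $0\leq\widetilde\psi_{n,k}\leq 1$, identically $1$ on $B(x_{n,k},2^{-n/2})$, supported in $B(x_{n,k},2^{1-n/2})$, and satisfying $\|\nabla\widetilde\psi_{n,k}\|_{L^\infty}\leq C\,2^{n/2}$; such a cutoff is obtained by rescaling a single fixed radial bump. Setting
$$\Psi(x):=\sum_{n,k}\widetilde\psi_{n,k}(x),\qquad\psi_{n,k}:=\widetilde\psi_{n,k}/\Psi,$$
the identity $\sum_{n,k}\psi_{n,k}=1$ and the support property are immediate, so the task reduces to establishing a positive lower bound for $\Psi$ and a pointwise bound $|\nabla\Psi|\leq C\,2^{n/2}$ on $\operatorname{supp}\widetilde\psi_{n,k}$.

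The lower bound is easy: since $\mathbb R^d=\bigcup_n\mathcal B_n$ and $\mathcal B_n\subset\bigcup_k B(x_{n,k},2^{-n/2})$, every $x\in\mathbb R^d$ lies in some $B(x_{n,k},2^{-n/2})$ on which $\widetilde\psi_{n,k}(x)=1$, whence $\Psi\geq 1$ everywhere.

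The heart of the proof is a uniform bound on the cardinality of
$$\mathcal I(x):=\{(n',k'):x\in B(x_{n',k'},2^{1-n'/2})\}.$$
I would proceed in two steps. First, Proposition~\ref{Shen, Lemma 1.4} applied at $x_{n',k'}\in\mathcal B_{n'}$ (so that $\rho(x_{n',k'})\sim 2^{-n'/2}$), together with $|x-x_{n',k'}|\leq 2^{1-n'/2}\sim\rho(x_{n',k'})$, yields $\rho(x)\sim\rho(x_{n',k'})\sim 2^{-n'/2}$; hence only a bounded set of scales $n'$, determined by $\rho(x)$, can contribute. Second, for each such scale the finite-overlap estimate of Lemma~\ref{DZ, Lemma 2.3} (applied with a fixed absolute radius $R$) bounds the number of admissible $k'$ by an absolute constant. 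Combining the two gives $|\mathcal I(x)|\leq N_0$ uniformly in $x$.

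It follows that whenever $x\in\operatorname{supp}\widetilde\psi_{n,k}$ and $(n',k')\in\mathcal I(x)$, one has $2^{n'/2}\sim 2^{n/2}$, and so
$$|\nabla\Psi(x)|\leq\sum_{(n',k')\in\mathcal I(x)}\|\nabla\widetilde\psi_{n',k'}\|_{L^\infty}\leq C\,N_0\,2^{n/2}.$$
Using $\Psi\geq 1$ and the quotient rule
$$\nabla\psi_{n,k}=\frac{\nabla\widetilde\psi_{n,k}}{\Psi}-\frac{\widetilde\psi_{n,k}\,\nabla\Psi}{\Psi^2}$$
then yields the desired $\|\nabla\psi_{n,k}\|_{L^\infty}\leq C\,2^{n/2}$. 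The main obstacle is the cross-scale overlap control: without Proposition~\ref{Shen, Lemma 1.4}, the sum defining $\Psi$ could a priori pick up contributions from arbitrarily many scales $n'$ at a single point, and it is precisely the quantitative comparability of $\rho$-values supplied by that proposition that confines the relevant $n'$ to a band of width $O(1)$ around $-2\log_2\rho(x)$.
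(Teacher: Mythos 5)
The paper does not prove this lemma; it is quoted verbatim from Dziuba\'nski--Zienkiewicz \cite{DZ}, Lemma 2.5, whose proof is exactly the normalization argument you give: smooth cutoffs adapted to the covering of Lemma \ref{DZ, Lemma 2.3}, divided by their sum, with the gradient controlled via the bounded overlap of the balls and the comparability $\rho(x)\sim 2^{-n/2}$ on $\mathrm{supp}\,\psi_{n,k}$ coming from Proposition \ref{Shen, Lemma 1.4}. Your reconstruction is correct and essentially identical to the source; the only cosmetic remark is that the cross-scale overlap bound you derive from Shen's lemma is already packaged in the statement of Lemma \ref{DZ, Lemma 2.3} (the cardinality bound there runs over all pairs $(n',k')$, not a single scale), though you still need Shen's lemma, as you note, to conclude $2^{n'/2}\sim 2^{n/2}$ for the contributing indices in the gradient estimate.
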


\begin{Lemma}[see (4.7) in \cite{DZ}]\label{DZ}\label{DZ, 4.7}
For every $f\in H^1_L(\mathbb R^d)$, we have
$$\sum_{n,k} \|\psi_{n,k}f\|_{h^1_n}\leq C \|f\|_{H^1_L}.$$
\end{Lemma}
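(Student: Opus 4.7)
The plan is to reduce everything to atoms via Theorem A: write $f=\sum_j\lambda_j a_j$ with $(H^1_L,2)$-atoms $a_j$ and $\sum_j|\lambda_j|\leq C\|f\|_{H^1_L}$. Since $\|\cdot\|_{h^1_n}$ is sublinear and multiplication by $\psi_{n,k}$ respects $L^2$-convergence, summing in $j$ and then in $(n,k)$ reduces the lemma to proving
$$\sum_{n,k}\|\psi_{n,k}a\|_{h^1_n}\leq C$$
uniformly for every $(H^1_L,2)$-atom $a$ associated with a ball $B(x_0,r)$ with $r\leq\mathcal{C}_L\rho(x_0)$.

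The first step is to control which $(n,k)$ contribute. By Proposition \ref{Shen, Lemma 1.4} and the explicit choice $\mathcal{C}_L=8\cdot 9^{k_0}C_0$, the auxiliary function $\rho$ is comparable to $\rho(x_0)$ throughout $B(x_0,r)$, so only those $n$ with $2^{-n/2}$ comparable to $\rho(x_0)$ can satisfy $\mathcal{B}_n\cap B(x_0,r)\neq\emptyset$; this pins $n$ to a bounded range around some $n_0$. For each admissible $n$, the finite overlap property of Lemma \ref{DZ, Lemma 2.3} limits the set of $k$ with $\mathrm{supp}\,\psi_{n,k}\cap B(x_0,r)\neq\emptyset$ to a universal number. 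Hence the sum $\sum_{n,k}$ collapses to $O(1)$ summands.

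Next I would bound $\|\psi_{n,k}a\|_{h^1_n}\leq C$ for each surviving pair. Write $b:=\psi_{n,k}a$, so $\mathrm{supp}\,b\subset B(x_0,r)$ and $\|b\|_{L^2}\leq |B(x_0,r)|^{-1/2}$. On $B(x_0,2r)$ the pointwise bound $\mathcal{M}_n b\leq \mathcal{M}b$ with the $L^2$-boundedness of the Hardy-Littlewood maximal function and Cauchy-Schwarz give $\int_{B(x_0,2r)}\mathcal{M}_n b\,dx\leq C$. Off $B(x_0,2r)$ two subcases appear. If $r$ is comparable to $2^{-n/2}$ (the no-cancellation regime), the Gaussian kernel restricted to $t\leq 2^{-n}$ yields $|p_t*b(x)|\leq C 2^{nd/2}e^{-c|x-x_0|^2 2^n}\|b\|_{L^1}$, and integrating this Gaussian gives $O(\|b\|_{L^1})=O(1)$. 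If instead $r<\rho(x_0)/\mathcal{C}_L$, then $a$ carries $\int a=0$; splitting $b=\psi_{n,k}(x_0)\,a+(\psi_{n,k}-\psi_{n,k}(x_0))\,a$, the first summand is a multiple of a standard compactly supported $H^1$-atom (hence a local atom), while the second has $L^2$-norm bounded by $C(2^{n/2}r)\|a\|_{L^2}$ because $\|\nabla\psi_{n,k}\|_\infty\leq C 2^{n/2}$, so it is $C(2^{n/2}r)\leq C$ times a no-cancellation local atom, to which the first subcase's Gaussian-decay argument applies.

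The main obstacle will be the bookkeeping at the threshold $r\sim\rho(x_0)/\mathcal{C}_L$ and verifying that the explicit constants hidden in Proposition \ref{Shen, Lemma 1.4} really do force every partition piece $\psi_{n,k}$ meeting $B(x_0,r)$ to sit on a single sheet $\mathcal{B}_{n_0}$, so that the sum is genuinely finite. Once this localization is in hand, the remaining estimates amount to elementary $L^2$-theory for the Hardy-Littlewood maximal function and Gaussian decay of the heat kernel $p_t$, and the finite-overlap conclusion of the second step yields the desired uniform bound.
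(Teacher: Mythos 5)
The paper does not actually prove this lemma: it is quoted verbatim as estimate (4.7) of Dziuba\'nski--Zienkiewicz \cite{DZ}, so there is no internal proof to compare against and your self-contained atomic argument is a genuine addition. Your architecture is sound. The reduction to a single $(H^1_L,2)$-atom via Theorem A is legitimate ($\sum_j\lambda_j\psi_{n,k}a_j$ converges in $L^1$, and $\mathcal M_n$ is subadditive and lower semicontinuous under $L^1$-convergence since $p_t(x,\cdot)\in L^\infty$). The localization also works: Proposition \ref{Shen, Lemma 1.4} does force $2^{-n/2}\sim\rho(x_0)$ whenever $\operatorname{supp}\psi_{n,k}\cap B(x_0,r)\ne\emptyset$ (apply the two-sided inequality at $x_0$ and at $x_{n,k}$, using $|x_{n,k}-x_0|\le 2^{1-n/2}+\mathcal C_L\rho(x_0)$ and $\rho(x_{n,k})\sim 2^{-n/2}$), and Lemma \ref{DZ, Lemma 2.3} then caps the number of surviving $k$ for each such $n$. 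The local $L^2$ estimate and the treatment of the constant piece $\psi_{n,k}(x_0)a$ (a genuine cancellative atom, hence bounded in $H^1$ and a fortiori in $h^1_n$) are fine.

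The one step that fails as written is your handling of the gradient piece $(\psi_{n,k}-\psi_{n,k}(x_0))a$ when $r<\rho(x_0)/\mathcal C_L$. You call it ``$C(2^{n/2}r)\le C$ times a no-cancellation local atom, to which the first subcase's Gaussian-decay argument applies.'' But a cancellation-free, $L^2$-normalized bump $c$ supported in $B(x_0,r)$ does \emph{not} have uniformly bounded $h^1_n$-norm when $r\ll 2^{-n/2}$: choosing $t\sim|x-x_0|^2\le 2^{-n}$ gives $\mathcal M_nc(x)\gtrsim \|c\|_{L^1}|x-x_0|^{-d}$ on the annulus $2r\le|x-x_0|\le 2^{-n/2}$, whose integral is $\log(2^{-n/2}/r)$, unbounded as $r\to 0$. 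Your subcase (a) argument only yields $O(1)$ because there $r\sim 2^{-n/2}$ and this intermediate annulus has bounded ratio of radii; it cannot be quoted for small $r$. The conclusion survives only if you refuse to discard the small coefficient before integrating: with $u:=2^{n/2}r\in(0,C]$, the gradient piece has $L^1$-norm $\lesssim u$, the annulus contributes $\lesssim u\log(e/u)$, and the far zone $|x-x_0|>2^{-n/2}$ contributes $\lesssim u$ by the truncated Gaussian decay $\sup_{t\le 2^{-n}}p_t(x,y)\lesssim 2^{-nN}|x-x_0|^{-2N-d}$; then invoke $\sup_{0<u\le C}u\log(e/u)<\infty$. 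With that correction inserted, the proof closes.
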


In this section, we fix a non-negative function $\varphi\in \mathcal S(\mathbb R^d)$ with supp $\varphi\subset B(0,1)$ and $\int_{\mathbb R^d}\varphi(x)dx=1$. Then, we define the linear operator $\mathfrak H$  by
$$\mathfrak H(f)= \sum_{n,k}\Big(\psi_{n,k}f- \varphi_{2^{-n/2}}*(\psi_{n,k}f)\Big).$$

In order to prove Theorem \ref{the main theorem}, we need two key lemmas.

\begin{Lemma}\label{key lemma H}
The operator $\mathfrak H$ maps continuously $H^1_L(\mathbb R^d)$ into $H^1(\mathbb R^d)$.
\end{Lemma}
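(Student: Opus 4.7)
The plan is to reduce, via the triangle inequality in $H^1(\mathbb R^d)$ and Lemma~\ref{DZ, 4.7}, to the single-piece estimate
\begin{equation*}
\|\psi_{n,k}f - \varphi_{2^{-n/2}}*(\psi_{n,k}f)\|_{H^1(\mathbb R^d)} \leq C\|\psi_{n,k}f\|_{h^1_n}
\end{equation*}
uniformly in $(n,k)$; granting this, summation and Lemma~\ref{DZ, 4.7} yield $\|\mathfrak H(f)\|_{H^1} \leq C\|f\|_{H^1_L}$, which is precisely the claim. Abbreviating $g = \psi_{n,k}f$, $r = 2^{-n/2}$ and $h = g - \varphi_r*g$, I record three structural properties that drive everything: (i) $\mathrm{supp}\,h \subset B^{**} := B(x_{n,k}, 3r)$ by Lemma~\ref{DZ, Lemma 2.5} together with $\mathrm{supp}\,\varphi\subset B(0,1)$; (ii) $\int_{\mathbb R^d}h = 0$ since $\int\varphi = 1$; and (iii) $\|h\|_{L^1} \leq 2\|g\|_{L^1} \leq 2\|g\|_{h^1_n}$, using $|g|\leq \mathcal M_n g$ a.e. The single-piece estimate then reduces to bounding $\|\mathfrak M h\|_{L^1(\mathbb R^d)}$, which I split as $\int_{5B^{**}} + \int_{(5B^{**})^c}$.

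For the tail integral I would exploit (ii): for $\phi\in\mathcal A$ and $|y-x|<t$, rewrite
\begin{equation*}
h*\phi_t(y) = \int_{\mathbb R^d} h(z)\bigl[\phi_t(y-z) - \phi_t(y-x_{n,k})\bigr]\,dz,
\end{equation*}
and combine the gradient estimate $|\nabla\phi_t(w)|\leq t^{d+1}/(t^2+|w|^2)^{d+1}$ with $|z-x_{n,k}|\leq 3r$ on $\mathrm{supp}\,h$. A brief case analysis separating $t\leq|x-x_{n,k}|/2$ from $t>|x-x_{n,k}|/2$ yields the pointwise bound $\mathfrak M h(x)\leq Cr\|h\|_{L^1}/|x-x_{n,k}|^{d+1}$ for $x\notin 5B^{**}$, and integration in polar coordinates then contributes $C\|h\|_{L^1}\leq C\|g\|_{h^1_n}$.

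For the local integral I would use the subadditivity $\mathfrak M h\leq \mathfrak M g + \mathfrak M(\varphi_r*g)$ and treat the two pieces separately. The $\mathfrak M(\varphi_r*g)$ piece is handled by the trivial bound $\|\phi_t*\varphi_r\|_{L^\infty}\leq Cr^{-d}$, valid for every $t>0$ since $\varphi$ is Schwartz; this gives $|\phi_t*\varphi_r*g(y)|\leq Cr^{-d}\|g\|_{L^1}$, and integration over $5B^{**}$ (of Lebesgue measure $\sim r^d$) contributes $C\|g\|_{L^1}\leq C\|g\|_{h^1_n}$. For $\mathfrak M g$, I would split the scale $t$ at $r$: for $t\geq r$ the same trivial $L^\infty$-$L^1$ bound applies, while for $t<r$ I would dominate the restricted grand-maximal-function convolution by (a non-tangential version of) the Dziuba\'nski--Zienkiewicz local maximal function $\mathcal M_n g$, using the standard pointwise comparison of $\mathcal A$-convolutions with Gauss-kernel averages at the same scale; this contributes at most $C\|\mathcal M_n g\|_{L^1} = C\|g\|_{h^1_n}$.

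The principal obstacle I anticipate is the small-scale comparison in the local estimate: pinning down the right form of Gauss-kernel maximal function that simultaneously dominates the $\mathcal A$-grand maximal at scales $t<r$ and has $L^1$-norm controlled by the $h^1_n$-norm rather than by a stronger norm such as $\|\mathcal M g\|_{L^1}$ (which need not be finite). The tail estimate and the $\mathfrak M(\varphi_r*g)$-piece are essentially standard $H^1$-atomic-type computations, and the reduction via Lemma~\ref{DZ, 4.7} is pure bookkeeping.
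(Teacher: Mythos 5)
Your outline is sound, but note that the paper itself gives no proof of this lemma: it simply cites Lemma~5.1 of \cite{Ky2}, and the argument there runs through the local atomic decomposition rather than through maximal-function estimates. Concretely, one decomposes each $\psi_{n,k}f=\sum_j\lambda_j a_j$ into local $(H^1_L,q)$-atoms via Lemma~\ref{Ky2, Lemma 6.5} (with $\sum_j|\lambda_j|\leq C\|\psi_{n,k}f\|_{h^1_n}$) and then checks that each $a_j-\varphi_{2^{-n/2}}*a_j$ is a fixed multiple of a classical $H^1$-atom: the support and size conditions are as in your step for the mollified piece, and the moment condition holds because either $a_j$ already has mean zero or the mean is cancelled by subtracting $\varphi_{2^{-n/2}}*a_j$. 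Summing with Lemma~\ref{DZ, 4.7} finishes the proof. The advantage of that route is precisely that it avoids the one step you correctly flag as the principal obstacle: there is no pointwise domination of the truncated grand maximal function $\sup_{\phi\in\mathcal A}\sup_{|y-x|<t<r}|g*\phi_t(y)|$ by the radial Gauss-kernel maximal function $\mathcal M_n g$, only an $L^1$-norm equivalence, which is Goldberg's characterization of local Hardy spaces (with constants uniform in $n$ by dilation invariance). If you invoke that theorem explicitly your proof closes; as written, the phrase ``standard pointwise comparison'' overstates what is available, and this is the one place where your argument needs either the Goldberg machinery or the atomic detour. The remaining steps (support, cancellation, the tail estimate via the gradient bound on $\phi_t$, and the $L^\infty$--$L^1$ bounds for $t\geq r$) are all correct.
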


The proof of Lemma \ref{key lemma H} can be found in \cite{Ky2} (see Lemma 5.1 of \cite{Ky2}).

\begin{Lemma}\label{the key lemma}
There exists a constant $C= C(\varphi,d)>0$ such that for all $(n,k)\in\mathbb Z\times \mathbb Z^+$, $g\in BMO_L(\mathbb R^d)$  and $f\in h^1_n(\mathbb R^d)$ with supp $f\subset B(x_{n,k}, 2^{1-n/2})$, we have
$$\Big\|(\varphi_{2^{-n/2}}*f)g\Big\|_{H^1_L}\leq C \|f\|_{h^1_n} \|g\|_{BMO_L}.$$
\end{Lemma}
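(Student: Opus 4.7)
The plan is to exhibit $(\varphi_{2^{-n/2}}*f)g$ as a scalar multiple, of size $C\|f\|_{h^1_n}\|g\|_{BMO_L}$, of a sum of two $(H^1_L,2)$-atoms associated with a single ball. The crucial input is the membership $x_{n,k}\in\mathcal B_n$, which forces
$$\frac{2^{-n/2}}{\sqrt 2}<\rho(x_{n,k})\le 2^{-n/2},$$
so the ambient scale $2^{-n/2}$ is comparable to $\rho(x_{n,k})$. This is exactly what places us in the cancellation-free regime of the atomic theory of $H^1_L$, and compensates for the fact that $\varphi_{2^{-n/2}}*f$ has, a priori, a non-vanishing integral.

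First, the support properties of $\varphi$ and $f$ give $\operatorname{supp}(\varphi_{2^{-n/2}}*f)\subset B_0:=B(x_{n,k},3\cdot 2^{-n/2})$. Since $\mathcal C_L=8\cdot 9^{k_0}C_0\ge 72>3\sqrt 2$, the chain above yields
$$\frac{\rho(x_{n,k})}{\mathcal C_L}<2^{-n/2}\le 3\cdot 2^{-n/2}\le \mathcal C_L\,\rho(x_{n,k}).$$
Consequently any $L^2$-function supported in $B_0$ whose $L^2$-norm is at most $|B_0|^{-1/2}$ is, up to normalization, an $(H^1_L,2)$-atom associated with $B_0$: no vanishing integral needs to be verified.

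Next I split
$$(\varphi_{2^{-n/2}}*f)\,g=(\varphi_{2^{-n/2}}*f)(g-g_{B_0})+g_{B_0}\,(\varphi_{2^{-n/2}}*f),$$
where $g_{B_0}=|B_0|^{-1}\int_{B_0}g$. Since $\rho(x_{n,k})\le 3\cdot 2^{-n/2}$, the second contribution to $\|g\|_{BMO_L}$ controls $|g_{B_0}|\le\|g\|_{BMO_L}$. A crude $L^\infty$-bound, combined with the pointwise inequality $|f|\le \mathcal M_nf$ a.e.\ (a consequence of the $L^1$-convergence $P_{\sqrt t}*f\to f$ as $t\to 0$), gives
$$\|\varphi_{2^{-n/2}}*f\|_{L^\infty}\le C\,2^{nd/2}\|f\|_{L^1}\le C\,2^{nd/2}\|f\|_{h^1_n}.$$
Together with $|B_0|^{1/2}\le C\,2^{-nd/4}$ and the classical John--Nirenberg bound $\|(g-g_{B_0})\chi_{B_0}\|_{L^2}\le C\|g\|_{BMO_L}|B_0|^{1/2}$, this yields
$$\bigl\|(\varphi_{2^{-n/2}}*f)(g-g_{B_0})\bigr\|_{L^2}+\bigl\|g_{B_0}\,(\varphi_{2^{-n/2}}*f)\bigr\|_{L^2}\le C\|f\|_{h^1_n}\|g\|_{BMO_L}\,|B_0|^{-1/2}.$$

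By the preliminary observation, each summand is a multiple of an $(H^1_L,2)$-atom with coefficient bounded by $C\|f\|_{h^1_n}\|g\|_{BMO_L}$, and Theorem A closes the estimate. The main obstacle is precisely the scale matching $2^{-n/2}\sim\rho(x_{n,k})$: without it the atomic theory would demand a cancellation condition that $\varphi_{2^{-n/2}}*f$ does not satisfy; once this scale comparison is noted, the remaining estimates are routine H\"older/$BMO$ computations.
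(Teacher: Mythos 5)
Your proof is correct, and it takes a noticeably more elementary route than the paper's. The paper first invokes an external local atomic decomposition (Lemma 6.5 of \cite{Ky2}) to write $f=\sum_j\lambda_j a_j^{n,k}$ with $(H^1_L,2)$-atoms supported in $B(x_{n,k},2^{2-n/2})$ and $\sum_j|\lambda_j|\le C\|f\|_{h^1_n}$, then shows that each $\varphi_{2^{-n/2}}*a_j^{n,k}$ is a multiple of a cancellation-free atom on $B(x_{n,k},5\cdot 2^{-n/2})$ and that $(\varphi_{2^{-n/2}}*a_j^{n,k})(g-g_B)$ is $C\|g\|_{BMO}$ times a $(H^1_L,3/2)$-atom, before summing. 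You bypass the atomic decomposition of $f$ entirely: the bound $\|\varphi_{2^{-n/2}}*f\|_{L^\infty}\le C2^{nd/2}\|f\|_{L^1}$ together with $\|f\|_{L^1}\le\|f\|_{h^1_n}$ (valid since $|f|\le\mathcal M_nf$ a.e.) lets you treat $(\varphi_{2^{-n/2}}*f)g$ directly as $C\|f\|_{h^1_n}\|g\|_{BMO_L}$ times a sum of two $(H^1_L,2)$-atoms on the single ball $B_0$. Both arguments hinge on the same two observations --- the scale matching $2^{-n/2}\sim\rho(x_{n,k})$ that removes the moment condition, and the fact that $\rho(x_{n,k})\le r(B_0)$ makes $|g_{B_0}|$ controlled by $\|g\|_{BMO_L}$ --- but your version is self-contained and in fact proves the slightly stronger estimate with $\|f\|_{L^1}$ in place of $\|f\|_{h^1_n}$; the paper's detour through $h^1_n$-atoms buys nothing here beyond consistency with the machinery it reuses from \cite{Ky2} elsewhere (e.g.\ in Lemma \ref{key lemma H}). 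All the individual steps you give (Young's inequality, John--Nirenberg in the form $\|(g-g_{B_0})\chi_{B_0}\|_{L^2}\le C\|g\|_{BMO}|B_0|^{1/2}$, the verification $\frac{1}{\mathcal C_L}\rho(x_{n,k})<3\cdot 2^{-n/2}\le\mathcal C_L\rho(x_{n,k})$, and the appeal to Theorem A) check out.
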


To prove Lemma \ref{the key lemma}, we need the following.

\begin{Lemma}[see \cite{Ky2},  Lemma 6.5]\label{Ky2, Lemma 6.5}
Let $1<q\leq \infty$,  $n\in\mathbb Z$ and $x\in \mathcal B_n$. Suppose that $f\in h^1_n(\mathbb R^d)$ with supp $f\subset B(x, 2^{1-n/2})$. Then, there are $(H^1_L,q)$-atoms $a_j$ related to the balls $B(x_j,r_j)$ such that $ B(x_j,r_j)\subset B(x, 2^{2-n/2})$ and
$$f= \sum_j \lambda_j a_j, \quad \sum_j |\lambda_j|\leq C \|f\|_{h^1_n}$$
with a positive constant $C$ independent of $n$ and $f$.
\end{Lemma}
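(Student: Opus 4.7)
The plan is to obtain the $(H^1_L,q)$-atomic decomposition of $f$ from the classical atomic decomposition of the local Hardy space $h^1_n(\mathbb R^d)$ (Goldberg's theorem, appropriately rescaled to scale $2^{-n/2}$), and then use Shen's comparison for the auxiliary function $\rho$ (Proposition \ref{Shen, Lemma 1.4}) to check that the resulting local atoms already meet the $(H^1_L,q)$-atom conditions, thanks to $f$ being concentrated around a point $x\in\mathcal B_n$.

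First I would invoke the Goldberg-type atomic decomposition of $h^1_n(\mathbb R^d)$: there exist scalars $\lambda_j$ and functions $\tilde a_j$, supported in balls $B(x_j,r_j)$ with $r_j\leq 2^{-n/2}$, satisfying $\|\tilde a_j\|_{L^q}\leq |B(x_j,r_j)|^{1/q-1}$ and $\int\tilde a_j=0$ whenever $r_j<2^{-n/2}$, such that $f=\sum_j \lambda_j\tilde a_j$ with $\sum_j|\lambda_j|\leq C\|f\|_{h^1_n}$. Because $\mathrm{supp}\,f\subset B(x,2^{1-n/2})$, the decomposition can be arranged (e.g.\ by starting from a Calder\'on--Zygmund decomposition of $\mathcal M_n f$, whose exceptional set already sits inside $B(x,2^{1-n/2}+2^{-n/2})$) so that each center satisfies $x_j\in B(x,2^{1-n/2})$ and consequently $B(x_j,r_j)\subset B(x,2^{2-n/2})$.

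Next I would verify that each $\tilde a_j$ is, up to a uniform multiplicative constant, an $(H^1_L,q)$-atom. Since $x\in\mathcal B_n$ gives $\rho(x)>2^{-(n+1)/2}$, every $x_j$ produced above obeys
\[
\frac{|x_j-x|}{\rho(x)}<\frac{2^{2-n/2}}{2^{-(n+1)/2}}=2^{5/2}<9.
\]
Proposition \ref{Shen, Lemma 1.4} then yields both
\[
\rho(x_j)\geq \frac{\rho(x)}{C_0\cdot 9^{k_0}}\geq \frac{2^{-n/2}}{\sqrt 2\,C_0\,9^{k_0}}\qquad\text{and}\qquad \rho(x_j)\leq 9\,C_0\,2^{-n/2}.
\]
The lower bound, combined with the definition $\mathcal C_L=8\cdot 9^{k_0}C_0$ from \eqref{technique constant}, gives $\mathcal C_L\rho(x_j)\geq 8\cdot 2^{-n/2}/\sqrt 2>2^{-n/2}\geq r_j$, so the relative-size condition defining an $(H^1_L,q)$-atom holds. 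For the cancellation clause: if $r_j=2^{-n/2}$ (the only scale at which Goldberg's atoms may fail to have mean zero), the upper bound yields $\frac{1}{\mathcal C_L}\rho(x_j)\leq \frac{9^{1-k_0}}{8}\,2^{-n/2}<r_j$, so the $(H^1_L,q)$-atom definition does not demand cancellation at this scale, while at smaller scales the local atoms $\tilde a_j$ are already mean-zero. The support and $L^q$-size conditions are inherited directly. Absorbing the universal constant into the $\lambda_j$ produces the required decomposition with $\sum_j|\lambda_j|\leq C\|f\|_{h^1_n}$.

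The main obstacle I anticipate is the tight numerical matching between the critical scale $2^{-n/2}$ of the local atoms and the threshold $\mathcal C_L^{\pm 1}\rho(x_j)$ governing the $(H^1_L,q)$-atom dichotomy. The specific choice $\mathcal C_L=8\cdot 9^{k_0}C_0$ is calibrated exactly so that Shen's distortion estimate on the neighborhood $B(x,2^{2-n/2})$ of $\mathcal B_n$ closes up in both directions; verifying this numerically (and ensuring that the localization in the second step genuinely confines atom centers to $B(x,2^{1-n/2})$ without blowing up the $\ell^1$ sum) is where care is needed.
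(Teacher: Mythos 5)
The paper does not prove this lemma at all; it is imported verbatim from \cite{Ky2} (Lemma 6.5 there), so there is no in-paper argument to compare against. Your route --- the rescaled Goldberg atomic decomposition of $h^1_n(\mathbb R^d)$ combined with Proposition \ref{Shen, Lemma 1.4} to convert local atoms into $(H^1_L,q)$-atoms, with the constant $\mathcal C_L=8\cdot 9^{k_0}C_0$ of \eqref{technique constant} absorbing the distortion of $\rho$ on $B(x,2^{2-n/2})$ --- is exactly the standard argument used in the cited reference, and your numerical checks (both $r_j\leq \mathcal C_L\rho(x_j)$ and $\frac{1}{\mathcal C_L}\rho(x_j)<r_j$ at the critical scale) are correct. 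The one step you assert rather than prove is the localization of the Goldberg decomposition to $B(x,2^{2-n/2})$: since $h^1_n$ is defined here via the non-compactly-supported Gaussian maximal function, one must first pass to an equivalent maximal characterization with compactly supported test functions before the Calder\'on--Zygmund exceptional sets can be confined near $\mathrm{supp}\,f$; this is standard but is the only place where your sketch leans on unproved local-Hardy-space technology.
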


Here and in what follows, for any $B$ a ball in $\mathbb R^d$ and $f$ a locally integrable function, we denote by $f_B$ the average of $f$ on $B$.

\begin{proof}[Proof of Lemma \ref{the key lemma}]
As $x_{n,k}\in \mathcal B_n$, it follows from Lemma \ref{Ky2, Lemma 6.5} that there are $(H^1_L,2)$-atoms $a_j^{n,k}$ related to the balls $B(x_j^{n,k},r_j^{n,k})\subset B(x_{n,k}, 2^{2-n/2})$ such that
\begin{equation}\label{the key lemma 1}
f=\sum_j \lambda_j^{n,k} a_j^{n,k}\quad\mbox{and}\quad \sum_j |\lambda_j^{n,k}|\leq C \|f\|_{h^1_n},
\end{equation}
where the positive constant $C$ is independent of $f,n,k$.

Now, let us establish that $\varphi_{2^{-n/2}}*a_j^{n,k}$ is $C$ times a $(H^1_L,2)$-atom related to the ball $B(x_{n,k}, 5. 2^{-n/2})$. Indeed, it is clear that $\frac{1}{\mathcal C_L}\rho(x_{n,k})< 5. 2^{-n/2}< \mathcal C_L \rho(x_{n,k})$ since $x_{n,k}\in \mathcal B_n$; and  supp $\varphi_{2^{-n/2}}*a_j^{n,k}\subset B(x_{n,k}, 5. 2^{-n/2})$ since supp $\varphi\subset B(0,1)$ and supp $a_j^{n,k}\subset B(x_{n,k}, 2^{2-n/2})$. In addition, 
$$\|\varphi_{2^{-n/2}}*a_j^{n,k}\|_{L^2}\leq \|\varphi_{2^{-n/2}}\|_{L^2}\|a_j^{n,k}\|_{L^1}\leq (2^{-n/2})^{-d/2}\|\varphi\|_{L^2}\leq C |B(x_{n,k},5.2^{-n/2})|^{-1/2}.$$
These prove that $\varphi_{2^{-n/2}}*a_j^{n,k}$ is $C$ times a $(H^1_L,2)$-atom related to $B(x_{n,k}, 5. 2^{-n/2})$.

By an analogous argument, it is easy to check that $(\varphi_{2^{-n/2}}*a_j^{n,k})(g - g_{B(x_{n,k}, 5. 2^{-n/2})})$ is $C\|g\|_{BMO}$ times a  $(H^1_L,3/2)$-atom related to $B(x_{n,k}, 5. 2^{-n/2})$.

 Therefore, (\ref{the key lemma 1}) yields
\begin{eqnarray*}
\Big\|(\varphi_{2^{-n/2}}*f)g\Big\|_{H^1_L}&\leq& C \sum_j |\lambda_j|\|(\varphi_{2^{-n/2}}*a_j) (g- g_{B(x_{n,k}, 5. 2^{-n/2})})\|_{H^1_L}\\
&& + C\sum_j |\lambda_j|\|\varphi_{2^{-n/2}}*a_j\|_{H^1_L}  |g_{B(x_{n,k}, 5. 2^{-n/2})}|\\
&\leq& C\|f\|_{h^1_n}\|g\|_{BMO_L},
\end{eqnarray*}
where we used $|g_{B(x_{n,k}, 5. 2^{-n/2})}|\leq \|g\|_{BMO_L}$ since $\rho(x_{n,k})\leq 5. 2^{-n/2}$. 

\end{proof}

Our main results are strongly related to the recent result of Bonami, Grellier and Ky \cite{BGK}. In \cite{BGK}, the authors proved the following.

\begin{Theorem}\label{theorem of Bonami, Grellier and Ky}
There exists two continuous bilinear operators on the product space $H^1(\mathbb R^d)\times BMO(\mathbb R^d)$, respectively   $S:H^1(\bR^d)\times BMO(\bR^d) \mapsto L^{1}(\bR^{d})$ and $T:H^1(\bR^d)\times BMO(\bR^d) \mapsto H^{\log}(\bR^{d})$
 such that
$$ f\times g=S(f,g)+T(f,g).$$
\end{Theorem}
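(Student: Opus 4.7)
The plan is to build $S$ and $T$ from a paraproduct decomposition carried out in a fixed smooth wavelet basis, so that bilinearity is automatic, and then to verify boundedness of each paraproduct in the correct target space. Fix once and for all a compactly supported smooth wavelet basis $\{\psi_I\}_{I\in\mathcal D}$ of $L^2(\bR^d)$ with enough vanishing moments (a Daubechies basis will do), indexed by dyadic cubes. Standard wavelet theory gives the characterizations: $f\in H^1(\bR^d)$ iff the wavelet square function $\bigl(\sum_I|f_I|^2|I|^{-1}\chi_I\bigr)^{1/2}$ lies in $L^1$, and $g\in BMO(\bR^d)$ iff the coefficients $\{g_I=\langle g,\psi_I\rangle\}$ satisfy a Carleson packing condition with constant comparable to $\|g\|_{BMO}$. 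The coefficient maps $f\mapsto\{f_I\}$, $g\mapsto\{g_I\}$ are linear, so any operator built by summing bilinear expressions in $(f_I,g_{I'})$ is automatically bilinear in $(f,g)$.

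Formally $fg=\sum_{I,I'}f_I g_{I'}\psi_I\psi_{I'}$, and I would split the double sum into three groups according to whether $I\subsetneq I'$, $I'\subsetneq I$, or $I$ and $I'$ are comparable (overlap at comparable scale). After regrouping, the first group becomes the classical paraproduct
\begin{equation*}
\Pi_1(f,g)=\sum_I f_I\,(P_{I}g)\,\psi_I,
\end{equation*}
where $P_Ig$ denotes a dyadic average of $g$ over a fixed dilate of $I$. Coifman--Meyer paraproduct theory, implemented at the wavelet level via Carleson-measure estimates together with the square-function characterization of $H^1$, yields $\|\Pi_1(f,g)\|_{H^1}\lesssim\|f\|_{H^1}\|g\|_{BMO}$. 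I would set $S(f,g):=\Pi_1(f,g)$, which then maps continuously into $H^1\subset L^1$, and $T(f,g):=f\times g-S(f,g)$, namely the sum of the paraproduct where $f$ is averaged at the scale of $g$ and the "resonant" part at comparable scales.

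The real content is the bound $\|T(f,g)\|_{H^{\log}}\lesssim\|f\|_{H^1}\|g\|_{BMO}$. I would decompose $T(f,g)$ into generalized building blocks supported on dyadic cubes $I$, each essentially of the form $c_I b_I$ where $b_I$ is an $H^1$-atom on a fixed dilate of $I$ and $|c_I|\lesssim|\lambda_I|\,\|g\|_{BMO}\bigl(1+\log(e+\operatorname{dist}(I,0)/\ell(I))\bigr)$, the logarithm arising from John--Nirenberg applied to averages of $g$ on distant cubes. Summability in $H^{\log}$ then follows from a direct computation with the definition of the quasi-norm: the lower-type-$p<1$ property of $\Xi(t)=t/\log(e+t)$ (Remark \ref{sublinear property}) lets one interchange the Orlicz functional with the sum over $I$, and the log-growth of $|c_I|$ at spatial infinity is absorbed exactly by the denominator $\log(e+|x|)$ present in the $H^{\log}$ quasi-norm. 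A grand-maximal-function bound then follows because away from the dilated cube containing $I$ the vanishing moments of $\psi_I$ and its smoothness give integrable decay against the log-weighted measure.

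The main obstacle is precisely this $H^{\log}$ estimate on $T$. Two competing logarithmic effects must be balanced: (i) the tail of $\mathfrak M(\psi_I)$ away from $I$, controlled via vanishing moments and producing just enough decay to be integrable against $1/[\log(e+|x|)]$; and (ii) the growth of the local averages of $g$, which for cubes of bounded size centered far from the origin can be as large as $\|g\|_{BMO}\log(e+|x|)$. Neither effect alone forces integrability, but the \emph{combination} is exactly integrable against the weight $1/[\log(e+\mathfrak MT(f,g)(x))+\log(e+|x|)]$, and this match is the whole reason $H^{\log}$ is the right target space. Once this delicate pointwise/grand-maximal estimate is in place, the orthogonality and summing over dyadic cubes are routine, and bilinearity is free from the wavelet setup.
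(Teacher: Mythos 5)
The paper does not actually prove this statement: it is imported verbatim from \cite{BGK}, whose proof is indeed the wavelet--paraproduct argument you outline, so your overall strategy is the right one. However, your execution contains a genuine error in the assignment of the pieces to the two target spaces, and it is not cosmetic --- it is exactly the point of the theorem.

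You set $S(f,g)=\Pi_1(f,g)=\sum_I f_I (P_Ig)\psi_I$ and claim $\|\Pi_1(f,g)\|_{H^1}\lesssim \|f\|_{H^1}\|g\|_{BMO}$. This is false: the averages $P_Ig$ of a $BMO$ function are not uniformly controlled by $\|g\|_{BMO}$ (take $g(x)=\log|x|$; its averages over unit cubes centred at $x_0$ are of size $\log|x_0|$). The Coifman--Meyer/Carleson machinery applies to the paraproduct in which $g$ enters through its wavelet \emph{coefficients} (which do satisfy a Carleson packing condition) and $f$ through its averages; the paraproduct you wrote, where $g$ enters through its averages, is precisely the term responsible for the logarithmic loss and is bounded only into $H^{\log}$, not $H^1$. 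Symmetrically, your $T=f\times g-S$ then contains the resonant/diagonal part $\sum_I f_Ig_I\psi_I^2$, which cannot lie in $H^{\log}$: it is an $L^1$ function whose integral equals $\langle f,g\rangle\neq 0$ in general, so its grand maximal function decays only like $|x|^{-d}$ at infinity, and $\int_{|x|>e}|x|^{-d}(\log|x|)^{-1}\,dx=\infty$ shows such a function fails the $H^{\log}$ condition. The correct splitting (the one in \cite{BGK}) is the opposite of yours: the diagonal part is the $L^1$ piece --- it is the absolutely convergent part of the $H^1$--$BMO$ duality pairing, controlled by the wavelet square function of $f$ against the Carleson constant of $g$ --- while \emph{both} paraproducts go into $H^{\log}$ (the one with $f$ averaged in fact lands in $H^1\subset H^{\log}$). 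Your discussion of the balance between the logarithmic growth of the coefficients $|c_I|$ and the weight $\log(e+|x|)$ is the right idea, but it must be attached to $\sum_I f_I(P_Ig)\psi_I$, which you have placed on the wrong side of the decomposition.
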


Before giving the proof of the main theorems, we should point out that the bilinear operator $T$ in Theorem \ref{theorem of Bonami, Grellier and Ky}  satisfies
\begin{equation}\label{lost 1}
\|T(f,g)\|_{H^{\rm log}}\leq C \|f\|_{H^1}(\|g\|_{BMO}+ |g_{\mathbb Q}|)
\end{equation}
where $\mathbb Q:= [0,1)^d$ is the unit cube. To prove this, the authors in \cite{BGK} used the generalized H\"older inequality (see also \cite{BIJZ})
$$\|fg\|_{L^{\rm log}}\leq C \|f\|_{L^1}\|g\|_{\rm Exp}$$
and the fact that $\|g - g_{\mathbb Q}\|_{\rm Exp} \leq C \|g\|_{BMO}$. Here, $L^{\log}(\bR^{d})$ denotes  the space of all measurable functions $f$ such that $\int_{\mathbb R^d} \frac {|f(x)|}{\log (e+ |f(x)|) + \log(e+|x|) }dx <\infty$ with the norm
$$\|f\|_{L^{\log}}= \inf\left\{\lambda>0: \int_{\mathbb R^d} \frac {\frac{|f(x)|}{\lambda}}{\log (e+ \frac{|f(x)|}{\lambda}) + \log(e+|x|)}dx \leq 1\right\}$$
and  ${\rm Exp}(\mathbb R^d)$ denotes the space of all measurable functions $f$ such that $\int_{\mathbb R^d}(e^{|f(x)|}-1)\frac{1}{(1+|x|)^{2d}}dx<\infty$ with the norm
$$\|f\|_{\rm Exp}= \inf\left\{\lambda>0: \int_{\mathbb R^d}\Big(e^{|f(x)|/\lambda}-1\Big)\frac{1}{(1+|x|)^{2d}}dx\leq 1\right\}.$$

In fact, Inequality (\ref{lost 1}) also holds when we replace the unit cube $\mathbb Q$ by $B(0,r)$ for every $ r > 0$  since  $\|g - g_{B(0,r)}\|_{\rm Exp} \leq C \|g\|_{BMO}$. More precisely, there exists a constant $C > 0$ such that
\begin{equation}\label{log-estimate}
\|fg\|_{L^{\rm log}}\leq C \|f\|_{L^1}(\|g\|_{BMO}+ |g_{B(0, \rho(0))}|)\leq C  \|f\|_{L^1}\|g\|_{BMO_L}
\end{equation}
for all $f\in L^1(\mathbb R^d)$ and $g\in BMO_L(\mathbb R^d)$. As a consequence, we obtain
\begin{equation}\label{lost 2}
\|T(f,g)\|_{H^{\rm log}}\leq C  \|f\|_{H^1}\|g\|_{BMO_L}
\end{equation}
for all $f \in H^1(\mathbb R^d)$ and $g\in BMO_L(\mathbb R^d)$.

Now, we are ready to give the proof of the main theorems.

\begin{proof}[Proof of Theorem \ref{the main theorem}]
We define two bilinear operators $S_L$ and $T_L$ by
$$S_L(f,g)= S(\mathfrak H(f), g) + \sum_{n,k} \Big(\varphi_{2^{-n/2}}*(\psi_{n,k}f)\Big)g$$
and
$$T_L(f,g)=  T(\mathfrak H(f), g)$$
for all $(f,g)\in H^1_L(\mathbb R^d)\times BMO_L(\mathbb R^d)$. Then, it follows from  Theorem \ref{theorem of Bonami, Grellier and Ky}, Lemma \ref{DZ}, Lemma \ref{key lemma H} and Lemma \ref{the key lemma} that
\begin{eqnarray*}
\|S_L(f,g)\|_{L^1}
&\leq&
\|S(\mathfrak H(f), g)\|_{L^1}+ C \sum_{n,k}\Big\|\Big(\varphi_{2^{-n/2}}*(\psi_{n,k}f)\Big)g\Big\|_{H^1_L}\\
&\leq& C \|g\|_{BMO}\|\mathfrak H(f)\|_{H^1}+ C \|g\|_{BMO_L}\sum_{n,k}\|\psi_{n,k}f\|_{h^1_n}\\
&\leq& C \|f\|_{H^1_L} \|g\|_{BMO_L},
\end{eqnarray*}
and as (\ref{lost 2}), 
\begin{eqnarray*}
\|T_L(f,g)\|_{H^{\rm log}}= \|T(\mathfrak H(f),g)\|_{H^{\rm log}} &\leq& C \|\mathfrak H(f)\|_{H^1} \|g\|_{BMO_L}\\
&\leq& C \|f\|_{H^1_L} \|g\|_{BMO_L}.
\end{eqnarray*}
Furthermore, in the sense of distributions, we have
\begin{eqnarray*}
&&S_L(f,g)+ T_L(f,g)\\
&=& \left(\sum_{n,k} \Big(\psi_{n,k}f- \varphi_{2^{-n/2}}*(\psi_{n,k}f)\Big)\right)\times g + \sum_{n,k} \Big(\varphi_{2^{-n/2}}*(\psi_{n,k}f)\Big)g\\
&=& \left(\sum_{n,k}\psi_{n,k}f\right)\times g = f\times g,
\end{eqnarray*}
which ends the proof of Theorem \ref{the main theorem}.

\end{proof}

\begin{proof}[Proof of Theorem \ref{molified distributions}]
By the proof of Theorem \ref{the main theorem}, the function $\sum_{n,k} (\varphi_{2^{-n/2}}*(\psi_{n,k}f))g$ belongs to $H^1_L(\mathbb R^d)\subset L^1(\mathbb R^d)$. This implies that $\Big(\sum_{n,k} (\varphi_{2^{-n/2}}*(\psi_{n,k}f))g\Big)*\phi_\epsilon$ tends to $\sum_{n,k} (\varphi_{2^{-n/2}}*(\psi_{n,k}f))g$ almost everywhere, as $\epsilon \to 0$. Therefore, applying Theorem 1.8 of \cite{BIJZ}, we get
\begin{eqnarray*}
\lim_{\epsilon\to 0}(f\times g)_\epsilon(x) &=& \lim_{\epsilon\to 0}(\mathfrak H(f)\times g)_\epsilon(x) + \lim_{\epsilon\to 0}\Big(\sum_{n,k} (\varphi_{2^{-n/2}}*(\psi_{n,k}f))g\Big)*\phi_\epsilon(x)\\
&=&  \mathfrak H(f)(x) g(x) + \left(\sum_{n,k} (\varphi_{2^{-n/2}}*(\psi_{n,k}f))(x)\right)g(x)\\
&=& f(x) g(x)
\end{eqnarray*}
for almost every $x\in \mathbb R^d$, which completes the proof of Theorem \ref{molified distributions}.

\end{proof}

\medskip
\noindent         Department of Mathematics, University of Quy Nhon, 170 An Duong Vuong, Quy Nhon, Binh Dinh, Viet Nam \\
Email: dangky@math.cnrs.fr

\end{document}